\documentclass{article}
\usepackage[utf8]{inputenc}
\usepackage{amssymb}
\usepackage{amsmath, amssymb, amsthm}
\usepackage{url}

\usepackage{dsfont}
\usepackage{textcomp}
\usepackage{setspace}
\usepackage{float}
\usepackage{enumitem} 
\usepackage{mathpazo}       
\usepackage{caption}
\usepackage[english]{babel}
\usepackage{graphicx}
\graphicspath{{./images/}}
\usepackage{amsmath}
\usepackage[dvipsnames]{xcolor}
\usepackage{booktabs}
\usepackage{rotating} 
\usepackage{geometry} 

\usepackage[
    colorlinks=true,    
    linkcolor=blue,     
    citecolor=green, 
    filecolor=Magenta,  
    urlcolor=black   
]{hyperref}

\setlength{\topmargin}{-0.54cm}   
\setlength{\oddsidemargin}{0.46cm} 
\setlength{\evensidemargin}{0.46cm} 
\setlength{\textwidth}{15cm}       
\setlength{\textheight}{22cm}      
\setlength{\headheight}{0cm}       
\setlength{\headsep}{0.5cm}        
\setlength{\footskip}{1cm}         

\newtheorem{theorem}{\sc Theorem}[section]

\newtheorem{lem}[theorem]{\sc Lemma}
\newtheorem{prop}[theorem]{\sc Proposition}
\newtheorem{cor}[theorem]{\sc Corollary}

\newtheorem{ex}[theorem]{\sc Example}

\usepackage[nottoc]{tocbibind}
\usepackage{cite}  

\begin{document}

\title{A note on connecting multiplicative functions defined on finite groups}
\author{João Victor Monteiros de Andrade\thanks{Department of Computing, University of Brasilia. \texttt{jotandrade98@gmail.com, andrade.monteiros@aluno.unb.br}} 
\and Leonardo Santos da Cruz\thanks{Department of Computing, University of Brasilia. \texttt{leonardo-7238@hotmail.com, santos-cruz.sc@aluno.unb.br}}}

\date{} 
\maketitle

\section{Abstract}

In the present work we explore relationships between multiplicative functions defined in \cite{Garonzi_2018}, \cite{Lazorec}. To do so, we use other important quotients defined and studied in \cite{ndeg}, \cite{Marius}, thus establishing deeper connections that were previously little or not evaluated in some group families. Furthermore, we extract new characterizations and asymptotic patterns for some classes of groups.

\noindent{\textbf{Keywords:} Multiplicative functions; p-groups; GAP}

\section{Introduction}

\quad \, In \cite{Marius}, the cyclicity degree $(cdeg)$ is defined as a measure that evaluates the proportion between the number of cyclic subgroups and the total number of subgroups in the group. That work presented a series of interesting results for certain classes of finite groups, particularly subclasses of finite p-groups. In \cite{ndeg}, the degree of normality $(ndeg)$ was defined. This function, like the cyclicity degree, exhibits a series of interesting properties and is defined in a manner similar to the degree of simplicity, but instead of considering the number of cyclic subgroups, it considers the total number of normal subgroups in the group divided by the total number of subgroups. Analogous to the degree of simplicity, the degree of normality was applied to the same classes of finite and p-groups.

In \cite{Garonzi_2018}, the function $\alpha$ was defined, which enabled the establishment of various classification criteria for groups, applicable to several families of groups, such as non-solvable groups, among others. Finally, in \cite{Lazorec}, the function $\beta$ was introduced, which proved to be a very useful tool for evaluating the asymptotic behavior of certain families of groups, particularly some families of p-groups. Both the $\alpha$ and $\beta$ functions turned out to be multiplicative and also exhibited interesting characteristics with respect to their domain. In \cite{Lazorec}, the author proposes an investigation into the relationships between these functions (Problem 6.3), aiming to establish deeper connections beyond the order relation, which is clearly observed and also presented in the same work. In the present study, we present some of the use of direct relationships between some functions to verify the asymptotic behavior in some group families.

\paragraph{Notation.}
Throughout, $G$ denotes a finite group. Let $L(G)$ be the set of all subgroups of $G$, 
and let $c(G)$ be the number of cyclic subgroups of $G$. Define
\[
  \alpha(G) := \frac{c(G)}{|G|}, \qquad 
  \beta(G) := \frac{|L(G)|}{|G|}, \qquad 
  \mathrm{cdeg}(G) := \frac{c(G)}{|L(G)|} = \frac{\alpha(G)}{\beta(G)}.
\]
\quad\, We write $C_n$ for the cyclic group of order $n$, $C_p^n$ for the elementary abelian group $(C_p)^n$,
$D_{2n}$ for the dihedral group of order $2n$, and $D_{2^{n}}, Q_{2^{n}}, SD_{2^{n}}$ for the dihedral,
generalized quaternion, and semidihedral $2$-groups of order $2^{n}$, respectively.
The dicyclic group of order $4n$ is denoted by $\mathrm{Dic}_n$.

\section{Connections between functions}
\begin{theorem}\label{T1}

    Let $G$ be a finite group, then

\begin{equation}\label{eq. 1} 
    cdeg(G) = \frac{\alpha(G)}{\beta(G)}.
\end{equation}
\end{theorem}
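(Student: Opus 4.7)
The plan is to prove this identity by direct substitution from the definitions given in the notation paragraph, since the equality $\mathrm{cdeg}(G)=\alpha(G)/\beta(G)$ is essentially built into the way the three quantities are defined. First I would recall that $\alpha(G):=c(G)/|G|$, $\beta(G):=|L(G)|/|G|$, and $\mathrm{cdeg}(G):=c(G)/|L(G)|$, noting in particular that for any finite group the three numbers $c(G)$, $|L(G)|$ and $|G|$ are positive integers, so no division by zero issues arise and the quotient $\alpha(G)/\beta(G)$ is well defined.

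Next I would form the ratio $\alpha(G)/\beta(G)$, write it as a compound fraction using the two definitions, and simplify by cancelling the common factor $|G|$ appearing in both numerator and denominator. This yields
\[
\frac{\alpha(G)}{\beta(G)} \;=\; \frac{c(G)/|G|}{|L(G)|/|G|} \;=\; \frac{c(G)}{|L(G)|},
\]
and the right-hand side is exactly $\mathrm{cdeg}(G)$ by definition, which finishes the argument.

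Since the statement is purely a restatement of the definitions, there is no substantive obstacle: the only thing to check is that the identity one wants is literally read off from the formulas for $\alpha$, $\beta$, and $\mathrm{cdeg}$. I would therefore keep the write-up to a short one-line chain of equalities, perhaps prefaced by a remark that the theorem records this identity for later reference, since it will be the workhorse for translating results about $\alpha$ and $\beta$ into results about $\mathrm{cdeg}$ in the rest of the paper.
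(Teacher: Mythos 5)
Your proposal is correct and matches the paper's approach: the paper's proof consists only of the remark that the result is immediate, which is precisely the one-line cancellation of $|G|$ from the definitions that you carry out explicitly. Your version is, if anything, slightly more complete since you note that $c(G)$, $|L(G)|$, and $|G|$ are positive integers so the quotient is well defined.
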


\begin{proof}
    This result is immediate.
\end{proof}

This theorem establishes a strong relationship between the functions $\alpha$ and $\beta$ through the cyclicity degree. Another interesting connection between these multiplicative functions can be obtained for the specific class of groups with structure $C_{p} \rtimes C_{q^{n}}$. In \cite{deandrade2025}, a function $\mathfrak{J}$ is defined as the quotient of the number of nilpotent subgroups of a finite group $G$ by the total number of subgroups of the group. In that work, it was shown that $\mathfrak{J}(C_{p} \rtimes C_{q^{n}}) = cdeg(C_{p} \rtimes C_{q^{n}})$. Based on the equality established in \ref{T1}, it is possible to verify that in this particular case, $\mathfrak{J}(C_{p} \rtimes C_{q^{n}}) = \frac{\alpha(C_{p} \rtimes C_{q^{n}})}{\beta(C_{p} \rtimes C_{q^{n}})}$. 

\subsection{General results}

\begin{prop} \label{prop 3.4}
    For the groups in \{ $D_{2^n}, Q_{2^{n}},SD_{2^n}$ \} we have:
\begin{itemize}[label=$\circ$]
    \item{$\beta(D_{2^n}) = \frac{2^{n} + n -1}{2^n} ;$} 
    \item{$\beta(Q_{2^{n}}) = \frac{2^{n-1} + n-1}{2^n} ;$} 
    \item{$\beta(SD_{2^n}) = \frac{3\cdot2^{n-2} + n-1}{2^n}.$} 
\end{itemize}
   
\end{prop}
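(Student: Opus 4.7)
The plan is to compute $|L(G)|$ directly for each $G \in \{D_{2^n}, Q_{2^n}, SD_{2^n}\}$ by exploiting the standard presentations and the well-known classification of subgroups of these $2$-groups, and then divide by $|G| = 2^n$. The three cases should be treated in the order stated, since the arguments for $Q_{2^n}$ and $SD_{2^n}$ reuse the subgroup count obtained for $D_{2^n}$.

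For $D_{2^n} = \langle r,s \mid r^{2^{n-1}} = s^2 = 1,\, srs = r^{-1}\rangle$, I would split the subgroup lattice according to whether a subgroup lies in the rotation subgroup $R = \langle r\rangle \cong C_{2^{n-1}}$. Those contained in $R$ are the $n$ divisor subgroups. Any subgroup not contained in $R$ contains some reflection $sr^{i}$, and a direct check shows that for each $k \in \{0,1,\ldots,n-1\}$ the subgroups of order $2^{k+1}$ outside $R$ are exactly $\langle r^{2^{n-1-k}}, sr^{j}\rangle$ for $0 \leq j < 2^{n-1-k}$, giving $\sum_{k=0}^{n-1} 2^{n-1-k} = 2^{n}-1$ extra subgroups. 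Summing gives $|L(D_{2^n})| = 2^{n} + n - 1$.

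For $Q_{2^n} = \langle a,b \mid a^{2^{n-1}} = 1,\, b^{2} = a^{2^{n-2}},\, bab^{-1} = a^{-1}\rangle$, the key structural fact is that every element outside $\langle a\rangle$ has order $4$ with square equal to the unique central involution $z = a^{2^{n-2}}$, so any subgroup not contained in $\langle a\rangle$ must contain $Z := \langle z\rangle$. The subgroups inside $\langle a\rangle$ contribute $n$, and the subgroups containing $Z$ correspond via the correspondence theorem to subgroups of $Q_{2^n}/Z \cong D_{2^{n-1}}$, already enumerated in the previous case as $2^{n-1} + n - 2$. Of these, $n-1$ come from the cyclic quotient $\langle a\rangle / Z$, leaving $2^{n-1} - 1$ subgroups outside $\langle a\rangle$. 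Hence $|L(Q_{2^n})| = 2^{n-1} + n - 1$.

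For $SD_{2^n} = \langle a,b \mid a^{2^{n-1}} = b^{2} = 1,\, bab^{-1} = a^{2^{n-2}-1}\rangle$, I would use the fact that its three maximal subgroups are $M_{1} = \langle a\rangle \cong C_{2^{n-1}}$, $M_{2} = \langle a^{2}, b\rangle \cong D_{2^{n-1}}$, and $M_{3} = \langle a^{2}, ab\rangle \cong Q_{2^{n-1}}$, with all pairwise (and triple) intersections equal to $\langle a^{2}\rangle \cong C_{2^{n-2}}$. Since every proper subgroup is contained in some $M_{i}$, inclusion–exclusion gives
\[
|L(SD_{2^n})| \;=\; 1 + \sum_{i=1}^{3} |L(M_i)| - 2\,|L(\langle a^{2}\rangle)|,
\]
and substituting the two previous counts (applied with exponent $n-1$) together with $|L(C_{2^{n-2}})| = n-1$ collapses the expression to $3 \cdot 2^{n-2} + n - 1$. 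Dividing each of the three totals by $2^{n}$ yields the three stated values of $\beta$. The main obstacle is the semidihedral case: it lacks the clean recursive description enjoyed by the first two families, so the argument relies on correctly identifying its three maximal subgroups — in particular recognising that $M_{3}$ is generalised quaternion rather than dihedral — and then being careful with the inclusion–exclusion bookkeeping.
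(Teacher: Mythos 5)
Your proof is correct, and it is considerably more than the paper provides: the paper's entire proof of Proposition \ref{prop 3.4} is the sentence ``This result is immediate,'' implicitly leaning on the well-known subgroup counts for these three families (e.g.\ the formula $|L(D_{2m})|=\tau(m)+\sigma(m)$ and its analogues, which appear in the cited literature). Your argument supplies a self-contained derivation. Each step checks out: the dihedral count $n+\sum_{k=0}^{n-1}2^{n-1-k}=2^{n}+n-1$ is right; for $Q_{2^n}$ the observation that $(a^i b)^2=b^2=z$ forces every subgroup not inside $\langle a\rangle$ to contain $Z$, and the bookkeeping $n+\bigl[(2^{n-1}+n-2)-(n-1)\bigr]=2^{n-1}+n-1$ is correct (equivalently, only the trivial subgroup fails to contain $Z$, giving $1+(2^{n-1}+n-2)$); and for $SD_{2^n}$ the identification of the three maximal subgroups as $C_{2^{n-1}}$, $D_{2^{n-1}}$, $Q_{2^{n-1}}$ with all pairwise and triple intersections equal to $\langle a^2\rangle$ makes the inclusion--exclusion collapse to $1+n+(2^{n-1}+n-2)+(2^{n-2}+n-2)-2(n-1)=3\cdot 2^{n-2}+n-1$ as claimed. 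The recursive structure you exploit (quotienting $Q_{2^n}$ by its centre to reach $D_{2^{n-1}}$, then feeding both earlier counts into the semidihedral case) is a genuinely different and more informative route than simply quoting the closed-form lattice sizes; what it buys is a uniform, presentation-level explanation of why the three formulas differ only in the coefficient of $2^{n-2}$ (namely $4$, $2$, and $3$, reflecting how many of the ``reflection-type'' subgroups survive in each family).
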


Where the group presentations are:

\begin{itemize}
    \item{$D_{2^n} = \bigl\langle\,x,y \bigm| x^{2^{n-1}} = 1,\; y^2 = 1,\; y\,x\,y = x^{-1} \bigr\rangle$;} 
    \item{$Q_{2^{n}} = \langle x, y \mid x^{2^{n-1}} = y^4 = 1, \; yxy^{-1} = x^{2^{n-1}-1}\rangle;$} 
    \item{$SD_{2^n} = \langle x, y \mid x^{2^{n-1}} = y^2 = 1,  y^{-1}xy = x^{2^{n-2}-1} \rangle \quad (n \geq 4).$}
      
\end{itemize}

\begin{proof}
 This result is immediate.   
\end{proof}

\begin{cor}

  \begin{itemize}[label=$\circ$]
Considering the limits in the expressions of the proposition \ref{prop 3.4} we have
  
    \item{$\lim_{n \longrightarrow \infty}\beta(D_{2^n}) = 1;$} 
    \item{$\lim_{n \longrightarrow \infty}\beta(Q_{2^{n}}) = \frac{1}{2};$} 
    \item{$\lim_{n \longrightarrow \infty}\beta(SD_{2^n}) = \frac{3}{4}.$} 
\end{itemize}
\end{cor}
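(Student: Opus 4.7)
The plan is to reduce each limit to the single elementary fact that $\frac{n-1}{2^n} \to 0$ as $n \to \infty$. First I would take the three closed-form expressions supplied by Proposition \ref{prop 3.4} and split off the dominant term in each numerator: since $2^n + n - 1 = 2^n\cdot 1 + (n-1)$, $2^{n-1} + n - 1 = 2^n\cdot\tfrac{1}{2} + (n-1)$, and $3\cdot 2^{n-2} + n - 1 = 2^n\cdot\tfrac{3}{4} + (n-1)$, the three quotients can be rewritten as
\[
\beta(D_{2^n}) = 1 + \frac{n-1}{2^n}, \qquad \beta(Q_{2^n}) = \frac{1}{2} + \frac{n-1}{2^n}, \qquad \beta(SD_{2^n}) = \frac{3}{4} + \frac{n-1}{2^n}.
\]

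Then I would invoke the auxiliary limit $\lim_{n \to \infty}\frac{n-1}{2^n} = 0$. This can be justified in one line either by L'H\^opital's rule applied to the real extension $\frac{x-1}{2^x}$, or even more elementarily by the inductive inequality $2^n \geq n^2$ valid for $n \geq 4$, which yields $0 \leq \frac{n-1}{2^n} \leq \frac{n-1}{n^2} \to 0$. Combined with linearity of the limit, this gives the three claimed values $1$, $\tfrac{1}{2}$, and $\tfrac{3}{4}$ at once.

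There is no real obstacle here: the corollary is a purely analytic consequence of the algebraic formulas of Proposition \ref{prop 3.4}, and the only content is the observation that the additive error term $\frac{n-1}{2^n}$ is the same in all three cases and decays to zero. The proof is therefore essentially a single paragraph of arithmetic followed by one appeal to exponential-versus-polynomial growth.
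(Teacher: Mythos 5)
Your proposal is correct and follows the same route the paper intends: the corollary is stated as an immediate consequence of taking limits in the closed-form expressions of Proposition \ref{prop 3.4}, and the paper supplies no further argument. Your explicit decomposition into a constant plus the common error term $\frac{n-1}{2^n}$, together with the bound $2^n \geq n^2$ for $n \geq 4$, simply makes that one-line observation rigorous.
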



Asymptotic results can also be obtained for $\alpha$, considering the equation \ref{eq. 1}.  

\begin{prop} \label{prop 3.9}
    For the groups in \{ $D_{2^n}, Q_{2^{n}},SD_{2^n}$ \} we have:
\begin{itemize}[label=$\circ$]
    \item{$\alpha(D_{2^n}) = \left(\frac{2^{n-1} +n }{2^n + n -1 } \right) \left( \frac{2^{n} + n -1}{2^n}  \right);$} 
    \item{$\alpha(Q_{2^{n}}) = \left(\frac{2^{n-2} + n}{2^{n-1} +n -1} \right) \left( \frac{2^{n-1} + n-1}{2^n} \right);$} 
    \item{$\alpha(SD_{2^n}) = \left( \frac{3 \cdot 2^{n-3} + n}{3 \cdot 2^{n-2} +n -1} \right) \left( \frac{3\cdot2^{n-2} + n-1}{2^n} \right).$} 
\end{itemize}
\end{prop}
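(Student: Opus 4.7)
The plan is to invoke Theorem \ref{T1}, which gives $\alpha(G) = cdeg(G)\,\beta(G)$ for any finite group $G$. In each of the three displayed formulas, the second factor is exactly the value of $\beta(G)$ computed in Proposition \ref{prop 3.4}, so the task reduces to identifying the first factor as $cdeg(G) = c(G)/|L(G)|$ for $G \in \{D_{2^n}, Q_{2^n}, SD_{2^n}\}$.

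First I would read off $|L(G)|$ from Proposition \ref{prop 3.4}: since $\beta(G) = |L(G)|/|G|$ and $|G| = 2^n$, we immediately obtain $|L(D_{2^n})| = 2^n + n - 1$, $|L(Q_{2^n})| = 2^{n-1} + n - 1$, and $|L(SD_{2^n})| = 3 \cdot 2^{n-2} + n - 1$, which are precisely the three denominators appearing in the first factor. Thus the entire proposition reduces to establishing $c(D_{2^n}) = 2^{n-1} + n$, $c(Q_{2^n}) = 2^{n-2} + n$, and $c(SD_{2^n}) = 3 \cdot 2^{n-3} + n$.

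For the counting of cyclic subgroups I would, in each case, partition them into those contained in the maximal cyclic subgroup $\langle x \rangle \cong C_{2^{n-1}}$, contributing exactly $n$ in every case, and those generated by an element of the form $x^i y$. For $D_{2^n}$ every such element is an involution, yielding $2^{n-1}$ further cyclic subgroups of order $2$. For $Q_{2^n}$, every $x^i y$ has order $4$ with $(x^i y)^2 = y^2$, so the $2^{n-1}$ such elements pair up into $2^{n-2}$ cyclic subgroups of order $4$. For $SD_{2^n}$, the defining relation $y^{-1}xy = x^{2^{n-2}-1}$ yields $(x^i y)^2 = x^{i \cdot 2^{n-2}}$, whence $x^i y$ is an involution when $i$ is even and has order $4$ when $i$ is odd; this contributes $2^{n-2}$ extra cyclic subgroups of order $2$ together with $2^{n-3}$ of order $4$, summing to $3 \cdot 2^{n-3}$.

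The main obstacle will be the semidihedral case, where one must use the twisted commutation relation carefully to determine the orders of elements outside $\langle x \rangle$ and to avoid double-counting among the order-$4$ subgroups. The dihedral and generalized quaternion counts, together with the final substitution back into $\alpha(G) = cdeg(G)\,\beta(G)$, are then routine.
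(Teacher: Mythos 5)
Your proposal is correct and follows the same skeleton as the paper: both rest on the identity $\alpha(G)=\mathrm{cdeg}(G)\,\beta(G)$ from Theorem \ref{T1} together with the values of $\beta$ from Proposition \ref{prop 3.4}. The difference lies in how the first factor is obtained. The paper simply cites Theorems 3.3.4, 3.3.6 and 3.3.8 of \cite{Marius}, which already give $\mathrm{cdeg}(D_{2^n})$, $\mathrm{cdeg}(Q_{2^n})$ and $\mathrm{cdeg}(SD_{2^n})$ in closed form, so its proof is a one-line citation. You instead re-derive the cyclic subgroup counts $c(D_{2^n})=2^{n-1}+n$, $c(Q_{2^n})=2^{n-2}+n$ and $c(SD_{2^n})=3\cdot 2^{n-3}+n$ from the presentations, by splitting the cyclic subgroups into the $n$ contained in $\langle x\rangle$ and those generated by elements $x^iy$; your order computations (all $x^iy$ involutions in the dihedral case, all of order $4$ pairing up two-to-one in the quaternion case, and the even/odd dichotomy $(x^iy)^2=x^{i\cdot 2^{n-2}}$ in the semidihedral case giving $2^{n-2}$ involutions plus $2^{n-3}$ order-$4$ subgroups) are all correct. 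What your route buys is a self-contained proof that does not depend on the external cyclicity-degree theorems, at the cost of the extra counting work; what the paper's route buys is brevity. Either way the final substitution into equation \ref{eq. 1} is the same.
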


\begin{proof}

This result follows immediately from Theorems 3.3.4, 3.3.6 and 3.3.8 of \cite{Marius}, as well as from Proposition \ref{prop 3.4}, when applied to the equation \ref{eq. 1}.
\end{proof}

\begin{cor}

  \begin{itemize}[label=$\circ$]
Considering the limits of the proposition \ref{prop 3.9} we have
  
    \item{$\lim_{n \longrightarrow \infty}\alpha(D_{2^n}) = \frac{1}{2};$} 
    \item{$\lim_{n \longrightarrow \infty}\alpha(Q_{2^{n}}) = \frac{1}{4};$} 
    \item{$\lim_{n \longrightarrow \infty}\alpha(SD_{2^n}) = \frac{3}{8}.$} 
\end{itemize}
\end{cor}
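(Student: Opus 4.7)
The plan is to exploit the telescoping structure already visible in the formulas of Proposition~\ref{prop 3.9}. In each of the three expressions the denominator of the first factor coincides with the numerator of the second, so the products collapse. Carrying out this cancellation gives
\[
\alpha(D_{2^n}) = \frac{2^{n-1}+n}{2^n}, \qquad \alpha(Q_{2^n}) = \frac{2^{n-2}+n}{2^n}, \qquad \alpha(SD_{2^n}) = \frac{3\cdot 2^{n-3}+n}{2^n}.
\]
From here I would split each fraction as a constant plus $n/2^n$, with constants $1/2$, $1/4$, and $3/8$ respectively, and conclude by the standard limit $n/2^n \to 0$ (immediate from the ratio test).

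An alternative route, more in the spirit of the paper, uses Theorem~\ref{T1} explicitly: one writes $\alpha(G) = \mathrm{cdeg}(G)\,\beta(G)$, observes that the first factor in each formula of Proposition~\ref{prop 3.9} is precisely $\mathrm{cdeg}$ (by Theorems~3.3.4, 3.3.6, 3.3.8 of \cite{Marius}) while the second is $\beta$ (Proposition~\ref{prop 3.4}), and takes limits factorwise. Since $\mathrm{cdeg}$ tends to $1/2$ in all three families, combining with the previous corollary for $\beta$ produces $1\cdot \tfrac12$, $\tfrac12\cdot \tfrac12$, and $\tfrac34\cdot \tfrac12$, matching the claimed values.

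There is no genuine obstacle here; the argument is purely arithmetic once Proposition~\ref{prop 3.9} is in hand. The only point to be careful with is the additive $n$ and $n-1$ terms in numerators and denominators, but each is uniformly dominated by $2^n$, so they contribute nothing in the limit.
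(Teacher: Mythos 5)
Your proposal is correct and matches the paper's (unstated but clearly intended) argument: the corollary is meant to follow by taking limits directly in the expressions of Proposition~\ref{prop 3.9}, which is exactly your first route after the telescoping cancellation. The alternative factorwise route via $\mathrm{cdeg}\cdot\beta$ is a nice cross-check but does not differ in substance.
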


Due to property 2.5 in \cite{Garonzi_2018} it is possible to generalize the above results to direct products. Furthermore, based on Theorem \ref{T1} it is possible to obtain a closed expression for $\alpha$ as shown in the following proposition.

\begin{prop}
    Let $G = D_{2n} \times C_{2}^m$, then

    \begin{equation*}
        \alpha(D_{2n} \times C_{2}^m) = \frac{n + \tau(n)}{2n}.
    \end{equation*}

Furthermore,

    \begin{equation*}
       \lim_{n \longrightarrow \infty} \alpha(D_{2n} \times C_{2}^m) = \frac{1}{2}.
    \end{equation*}

\end{prop}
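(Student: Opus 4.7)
The plan is to reduce the computation of $\alpha(D_{2n} \times C_2^m)$ to that of $\alpha(D_{2n})$ by invoking the multiplicative-type property (Property~2.5 of \cite{Garonzi_2018}) that is alluded to in the paragraph immediately preceding the statement. Since $C_2^m$ is elementary abelian of exponent $2$, every nonidentity element has order two and generates a distinct subgroup of order two, so $c(C_2^m) = 2^m$ and hence $\alpha(C_2^m) = 1$. Granting the multiplicative behavior in this setting, one obtains $\alpha(D_{2n} \times C_2^m) = \alpha(D_{2n})$, and the problem reduces to counting cyclic subgroups of $D_{2n}$.

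Next, I would count $c(D_{2n})$ directly from the presentation $\langle x, y \mid x^{n} = y^{2} = 1,\, yxy = x^{-1}\rangle$. The cyclic subgroups split into two classes: those contained in the rotation subgroup $\langle x \rangle \cong C_n$, contributing $\tau(n)$ subgroups (one per divisor of $n$), and those generated by a reflection $x^{i}y$ with $0 \le i < n$, contributing $n$ distinct cyclic subgroups of order $2$ (two reflections generate the same $C_2$ only if they coincide). Thus $c(D_{2n}) = n + \tau(n)$, which yields $\alpha(D_{2n}) = (n + \tau(n))/(2n)$ and hence the stated closed form.

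For the asymptotic claim, I would invoke the elementary bound $\tau(n) \le 2\sqrt{n}$, obtained by pairing divisors $(d, n/d)$ about $\sqrt{n}$, so that $\tau(n)/n \to 0$, whence
\[
\lim_{n \to \infty} \frac{n + \tau(n)}{2n} = \frac{1}{2}.
\]
The main obstacle I foresee is pinpointing precisely how Property~2.5 of \cite{Garonzi_2018} applies in this regime: if its statement is restricted to coprime direct products, then it cannot be invoked as is (since $\gcd(2n, 2^{m}) \ge 2$), and one would instead need to enumerate cyclic subgroups of $D_{2n} \times C_2^{m}$ directly, classifying generators $(g, z)$ according to whether $g$ is a rotation (where $\mathrm{ord}(g, z) = \mathrm{lcm}(\mathrm{ord}(g), \mathrm{ord}(z))$) or a reflection (where $\mathrm{ord}(g, z) \in \{1, 2\}$), and then grouping generators of equal cyclic subgroups. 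In either case the key combinatorial input remains the decomposition $c(D_{2n}) = n + \tau(n)$ established above.
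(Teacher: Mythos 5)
Your proposal is correct, and it reaches the formula $\alpha(D_{2n})=\frac{n+\tau(n)}{2n}$ by a genuinely different route in the middle step. The paper, like you, first invokes Property~2.5 of \cite{Garonzi_2018} to get $\alpha(D_{2n}\times C_2^m)=\alpha(D_{2n})$ (so your worry about coprimality applies equally to the paper's proof; the property being cited is precisely the non-coprime statement that appending a $C_2$ factor to a group of even order does not change $\alpha$, iterated $m$ times, so no fallback enumeration is needed). But where you then count $c(D_{2n})=n+\tau(n)$ directly --- $\tau(n)$ cyclic subgroups inside the rotation subgroup $\langle x\rangle\cong C_n$ plus $n$ distinct order-two subgroups $\langle x^iy\rangle$, which is a correct and self-contained count --- the paper instead takes the known value $\mathrm{cdeg}(D_{2n})=\frac{n+\tau(n)}{\sigma(n)+\tau(n)}$ from Corollary~3.4.2 of \cite{Marius}, combines it with the subgroup count $|L(D_{2n})|=\sigma(n)+\tau(n)$ and the identity $\mathrm{cdeg}=\alpha/\beta$ of Theorem~\ref{T1}, and solves for $\alpha(D_{2n})$. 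Your version is more elementary and independent of \cite{Marius}; the paper's version is deliberately less self-contained because its point is to showcase the identity $\mathrm{cdeg}=\alpha/\beta$ as a computational device. The asymptotic step is identical in both: the bound $\tau(n)\le 2\sqrt{n}$ forces $\tau(n)/(2n)\to 0$, giving the limit $\tfrac12$.
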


\begin{proof}
By Property 2.5 of \( \alpha \) in \cite{Garonzi_2018}, it follows that
\( \alpha(D_{2n} \times C_2^m) = \alpha(D_{2n}) \). By Corollary 3.4.2 in \cite{Marius}, we have

\begin{equation*}
        cdeg(D_{2n}) = \frac{n + \tau(n)}{\sigma(n) + \tau(n)}.
\end{equation*}

Then it follows

\begin{align*}
     \frac{n + \tau(n)}{\sigma(n) + \tau(n)}  &= \frac{2n\alpha(D_{2n})}{\sigma(n) + \tau(n)}\\
        n + \tau(n)  &=  2n\alpha(D_{2n})\\
        \alpha(D_{2n}) &= \frac{n + \tau(n)}{2n}.
\end{align*}

Now, consider

\begin{align*}
         \lim_{n \longrightarrow \infty}\frac{n + \tau(n)}{2n} =\lim_{n \longrightarrow \infty}\frac{n + \tau(n)}{2n} = \lim_{n \longrightarrow \infty}\frac{n }{2n} + \lim_{n \longrightarrow \infty}\frac{\tau(n)}{2n} = \frac{1}{2} + \lim_{n \longrightarrow \infty}\frac{\tau(n)}{2n}.
\end{align*}

Now let \( d \) be a divisor of \( n \). Then \( e = \frac{n}{d} \) is also a divisor of \( n \), and at least one between \( d \) and \( e \) satisfies \( d \le \sqrt{n} \). Therefore, the number of pairs \( (d,e) \) is at most \( \sqrt{n} \), and therefore:

\[
\tau(n) \le 2\sqrt{n}.
\]

It follows that

\[
0 \le \frac{\tau(n)}{2n}
\le \frac{2\sqrt{n}}{2n}
= \frac{1}{\sqrt{n}}
\xrightarrow[n \to \infty]{} 0.
\]

Therefore,

\[
\lim_{n \to \infty} \frac{\tau(n)}{2n} = 0.
\]
\quad \, Which concludes the demonstration.

\end{proof}

\section{Nilpotent Groups}
\subsection{Abelian case}

\begin{prop} \label{prop 3.6}
Let $G = C_{2^{n+1}} \times C_{2} $, then

\begin{equation*}
    \beta(C_{2^{n+1}} \times C_{2} ) = \frac{3n +5}{2^{n+2}}.
\end{equation*}
\end{prop}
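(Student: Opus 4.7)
The plan is to compute $|L(C_{2^{n+1}} \times C_{2})|$ by direct enumeration and then divide by $|G| = 2^{n+2}$. Since $G$ is a finite abelian $2$-group of type $(2^{n+1}, 2)$, the structure theorem for finite abelian $p$-groups forces every subgroup to have invariants bounded coordinatewise by $(2^{n+1}, 2)$. Hence every subgroup of $G$ is either cyclic of order $2^{k}$ with $0 \le k \le n+1$, or isomorphic to $C_{2^{k}} \times C_{2}$ with $1 \le k \le n+1$. I would split the counting along these two types.

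First, I would count cyclic subgroups by an element-count. Writing $G = \langle x \rangle \oplus \langle y \rangle$ additively with $|x|=2^{n+1}$ and $|y|=2$, an element $(ax, by)$ has order $2^{k} \ge 4$ if and only if $ax$ has order $2^{k}$ in $\langle x \rangle$; this gives $\varphi(2^{k}) \cdot 2 = 2^{k}$ elements of order $2^{k}$, yielding exactly $2$ cyclic subgroups of order $2^{k}$ for each $2 \le k \le n+1$. For $k=1$ the elements of order $2$ are $2^{n}x$, $y$ and $2^{n}x+y$, so there are $3$ cyclic subgroups of order $2$. Together with the trivial subgroup, the cyclic count is $1 + 3 + 2n = 2n + 4$.

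Next I would count non-cyclic subgroups, claiming that there is exactly one copy of $C_{2^{k}} \times C_{2}$ in $G$ for each $1 \le k \le n+1$. The key observation is that any such $H$ has four elements of order dividing $2$, and $G$ itself has only four elements of order dividing $2$, so $H$ must contain the socle $\Omega_{1}(G) \cong C_{2} \times C_{2}$. Consequently $H / \Omega_{1}(G)$ is a subgroup of $G / \Omega_{1}(G) \cong C_{2^{n}}$ of order $2^{k-1}$, and this cyclic quotient has a unique subgroup of each admissible order. This contributes $n+1$ non-cyclic subgroups.

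Summing gives $|L(G)| = (2n+4) + (n+1) = 3n + 5$, and dividing by $|G| = 2^{n+2}$ yields the stated value of $\beta(G)$. The only step carrying real content is the uniqueness argument for the $C_{2^{k}} \times C_{2}$ subgroups, where one reduces to the unique chain of subgroups in $C_{2^{n}}$ via the $\Omega_{1}$-filtration; I expect this to be the main obstacle, while the cyclic enumeration and the final arithmetic are essentially bookkeeping.
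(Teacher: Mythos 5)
Your proposal is correct and follows essentially the same route as the paper: classify every subgroup of $C_{2^{n+1}}\times C_2$ as either cyclic or of type $C_{2^k}\times C_2$, count $1+3+2n$ cyclic subgroups and $n+1$ non-cyclic ones, and divide $3n+5$ by $|G|=2^{n+2}$. The only difference is in the bookkeeping: you count cyclic subgroups via elements of each order divided by $\varphi(2^k)$, and you actually justify the uniqueness of each $C_{2^k}\times C_2$ through the socle $\Omega_1(G)$ and the correspondence with the cyclic quotient $G/\Omega_1(G)\cong C_{2^n}$, a step the paper merely asserts by exhibiting $\langle 2^{\,n+1-r}\rangle\times C_2$.
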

    
\begin{proof}
Let $G = C_{2^{n+1}}\times C_{2}.$
By the Fundamental Theorem of Finitely Generated Abelian Groups, every subgroup 
$H\le G$ is also abelian and therefore isomorphic to
$ C_{2^r}\times C_{2^s}, \quad 0\le s\le1,\quad 0\le r\le n+1$, \cite{stehling1992computing}. Let us first consider cyclic subgroups \((s=0)\). In a cyclic group of order~\(2^{n+1}\), for each divisor \(2^r\) there is exactly one subgroup of order \(2^r\). Therefore, for \(r=0\), there is only the trivial subgroup, total \(1\); for \(r=1\) there are three involutions \(\,(2^n,0),\,(0,1)\) and \((2^n,1)\), therefore \(3\) subgroups of order 2. For \(2\le r\le n+1\) in \(G\) there are exactly two cyclic subgroups of order \(2^r\), as one comes from the only subgroup of order \(2^r\) in \(C_{2^{n+1}}\) with trivial factor in \(C_2\); the other comes from the only subgroup of order \(2^{r-1}\) in \(C_{2^{n+1}}\) added to the generator of \(C_2\). For \(r>n+1\) they do not exist, as the exponent of \(G\) is \(2^{n+1}\). Now let us consider the non‑cyclic subgroups \((s=1)\). For each \(1\le r\le n+1\), the direct product $ \langle 2^{\,n+1-r}\rangle \times C_2 \simeq C_{2^r}\times C_2$ is the only subgroup of \(G\) isomorphic to \(C_{2^r}\times C_2\). Finally, let \(N_k\) be the number of subgroups of order \(2^k\). Then there is

\[
N_k=
\begin{cases}
1, & k=0,\\
3, & k=1,\\
2, & 2\le k\le n+1,\\
0, & k>n+1.
\end{cases}
\]

For each \(1\le r\le n+1\), there is one more subgroup of order \(2^{r+1}\) (the non‑cyclic one), so that the total of subgroups is: $1+3+2n+(n+1)=3n+5.$ Dividing by the order of $C_{2^{n+1}}\times C_{2}$ follows the result.

\end{proof}

\begin{cor}
    Let $G = C_{2^{n+1}} \times C_{2} $, then $\lim_{n \longrightarrow \infty} \beta(C_{2^{n+1}} \times C_{2}) = 0$.
\end{cor}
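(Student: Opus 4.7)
The plan is to invoke Proposition \ref{prop 3.6} directly, which gives the closed-form expression
\[
\beta(C_{2^{n+1}} \times C_{2}) \;=\; \frac{3n+5}{2^{n+2}},
\]
so the whole task reduces to a one-line calculus fact: the ratio of a linear polynomial in $n$ to a term growing like $2^n$ tends to zero. Thus the entire corollary collapses into an asymptotic estimate; no additional group-theoretic machinery is required.

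To make the limit rigorous I would split the fraction as $\tfrac{3n+5}{2^{n+2}} = \tfrac{3n}{2^{n+2}} + \tfrac{5}{2^{n+2}}$, observing that the second summand manifestly tends to $0$. For the first, I would either appeal to the standard fact $\lim_{n\to\infty} \tfrac{n}{2^n} = 0$ (provable, e.g., via the inequality $2^n \geq \binom{n}{2} = \tfrac{n(n-1)}{2}$ for $n\geq 2$, which yields $0 \leq \tfrac{n}{2^n} \leq \tfrac{2}{n-1}$), or alternatively note that $2^{n+2} \geq (n+2)^2$ for all sufficiently large $n$, so
\[
0 \;\leq\; \frac{3n+5}{2^{n+2}} \;\leq\; \frac{3n+5}{(n+2)^2} \;\longrightarrow\; 0.
\]
Either bound, combined with the squeeze theorem, delivers the conclusion.

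I do not expect any obstacle here; the content of the corollary is purely a consequence of the explicit formula established in Proposition \ref{prop 3.6}, and the remaining step is a routine exponential-versus-polynomial comparison. The only stylistic decision is which elementary bound to cite; I would favor the inequality $2^n \geq \binom{n}{2}$ since it is self-contained and keeps the proof within a few lines.
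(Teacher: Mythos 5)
Your proposal is correct and follows exactly the route the paper intends: the corollary is stated as an immediate consequence of Proposition \ref{prop 3.6}, and the only content is the routine fact that $(3n+5)/2^{n+2}\to 0$, which you justify rigorously. The paper offers no written proof, so your elementary squeeze argument simply fills in the standard exponential-versus-polynomial comparison.
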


\begin{prop}
   Let \( G = C_{2^{n}p} \times C_{2} \), where \( p \) is a prime with \( p \geq 3 \), then

    \begin{equation*}
        \beta(C_{2^{n}p}\times C_{2}) = \frac{3n +2}{2^{n}p}.
    \end{equation*}
\end{prop}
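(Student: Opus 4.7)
The plan is to reduce the computation to the $2$-part by exploiting the coprimality between $2^{n+1}$ and $p$, and then to invoke the previous proposition. First I would apply the Chinese Remainder Theorem to rewrite
\[
G \;=\; C_{2^{n}p}\times C_{2} \;\cong\; \bigl(C_{2^{n}}\times C_{p}\bigr)\times C_{2} \;\cong\; \bigl(C_{2^{n}}\times C_{2}\bigr)\times C_{p},
\]
so that $G$ splits as a direct product of a $2$-group $P_{2}:=C_{2^{n}}\times C_{2}$ and the cyclic group of prime order $P_{p}:=C_{p}$ of coprime order.

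Next I would use the standard fact that in a finite nilpotent group every subgroup is the direct product of its Sylow subgroups, which here specializes to a bijection $L(G) \longleftrightarrow L(P_{2})\times L(P_{p})$, $H\mapsto (H\cap P_{2},\,H\cap P_{p})$. Consequently,
\[
|L(G)| \;=\; |L(P_{2})|\cdot|L(P_{p})|.
\]
Since $p$ is prime, $|L(C_{p})|=2$. For the $2$-part, I would invoke Proposition \ref{prop 3.6}: writing it for the cyclic factor of order $2^{n}$ (i.e.\ shifting the index in Proposition \ref{prop 3.6} by one) yields $|L(C_{2^{n}}\times C_{2})| = 3n+2$.

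Combining these two counts gives $|L(G)|=2(3n+2)=6n+4$, and dividing by $|G|=2^{n+1}p$ produces the announced formula
\[
\beta(C_{2^{n}p}\times C_{2}) \;=\; \frac{6n+4}{2^{n+1}p} \;=\; \frac{3n+2}{2^{n}p}.
\]
The only non-routine step is the reduction $|L(G)|=|L(P_{2})|\cdot|L(P_{p})|$, which I expect to be straightforward once nilpotency (or, equivalently, the coprime order of the two factors) is invoked; the remainder is arithmetic.
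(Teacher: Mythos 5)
Your proposal is correct and follows essentially the same route as the paper: decompose $G\cong (C_{2^{n}}\times C_{2})\times C_{p}$ via the Chinese Remainder Theorem, use coprimality to get $|L(G)|=|L(C_{2^{n}}\times C_{2})|\cdot|L(C_{p})|=(3n+2)\cdot 2$, and divide by $|G|$. The only (cosmetic) difference is that you obtain $|L(C_{2^{n}}\times C_{2})|=3n+2$ by a clean index shift in Proposition \ref{prop 3.6}, whereas the paper tabulates small cases and runs a somewhat redundant induction that ultimately cites the same proposition.
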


\begin{proof}

Let \( G = C_{2^n p} \times C_2 \), where \( p \) is an odd prime and \( n \geq 1 \). By the Fundamental Theorem of Finite Abelian Groups and the Chinese Remainder Theorem, we have:
\[
C_{2^n p} \cong C_{2^n} \times C_p \Rightarrow G \cong (C_{2^n} \times C_p) \times C_2 \cong (C_{2^n} \times C_2) \times C_p.
\]
\quad \, Since the primary decomposition of finite abelian groups is unique, it follows that the component associated with the 2-primary part of \( G \) is \( A = C_{2^n} \times C_2 \), and the component of order \( p \) is \( C_p \).

Let \( H \leq G \). Then there exists a unique decomposition \( H = H_2 \times H_p \), with \( H_2 \leq A \), \( H_p \leq C_p \). Therefore,
\[
 |L(G)|  = |L(A)| \cdot |L(C_p)|.
\]
\quad \, Since \( C_p \) is a cyclic group of prime order, it has exactly two subgroups: \( \{e\} \) and \( C_p \). Thus,
\[
|L(C_p)| = 2.
\]

For \( A = C_{2^n} \times C_2 \), we have the following cases:
\begin{itemize}
  \item \( n = 1 \): \( A \cong C_2 \times C_2 \Rightarrow  \) $|L(A)| = 5$.
  \item \( n = 2 \): \( A \cong C_4 \times C_2 \Rightarrow \) $|L(A)| = 8$.
  \item \( n = 3 \): \( A \cong C_8 \times C_2 \Rightarrow \) $|L(A)| =11$.
\end{itemize}

It follows that $|L(A)|$ grows as an arithmetic progression. That way,
$|L(A)| = |L(C_{2^n} \times C_2)| = 3n + 2$. Let us prove by induction on n. For the induction basis $|L(C_{2} \times C_2)| = 3(1) + 2 = 5$. Suppose it is valid for $ k\leq n$, thus $|L(C_{2^{k}} \times C_2)| = 3k + 2$. For $k+1$ it follows

\begin{align*}
    |L(C_{2^{k+1}} \times C_2)| = 3(k+1) + 2 = 3k+5,
\end{align*}

\noindent{which follows from Proposition \ref{prop 3.6}. Therefore, by, $|L(G)| = (3n + 2) \cdot 2 = 6n + 4.$ According to the definition of the function $\beta$, the result follows.}

\end{proof}

\begin{cor}
Let \( G = C_{2^n p} \times C_2 \), where \( p \) is a prime number with \( p \geq 3 \). Then

    \begin{equation*}
        \lim_{n \longrightarrow \infty}\beta(C_{2^{n}p}\times C_{2}) = 0.
    \end{equation*} 
\end{cor}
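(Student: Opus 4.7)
The plan is to obtain the limit directly from the closed expression established in the preceding proposition, namely
\[
\beta(C_{2^{n}p}\times C_{2}) = \frac{3n+2}{2^{n}p},
\]
so that the task reduces to showing that $\frac{3n+2}{2^{n}p}\to 0$ as $n\to\infty$, with $p\ge 3$ a fixed prime. Since $p$ is independent of $n$, it suffices to verify that $\frac{3n+2}{2^{n}}\to 0$, a statement which reflects the usual dominance of exponential growth over polynomial growth.

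First I would rewrite the expression as $\frac{3n+2}{2^{n}p} = \frac{1}{p}\bigl(\tfrac{3n}{2^{n}} + \tfrac{2}{2^{n}}\bigr)$ and observe that the second summand tends to $0$ by inspection. For the first summand, the cleanest route is a direct Bernoulli-type bound: for $n\ge 4$, one has $2^{n} = (1+1)^{n} \ge \binom{n}{2} = \frac{n(n-1)}{2}$, hence
\[
0 \le \frac{3n+2}{2^{n}p} \le \frac{3n+2}{p}\cdot\frac{2}{n(n-1)},
\]
and the right-hand side tends to $0$ as $n\to\infty$. Alternatively, one could apply the ratio test by noting that $\frac{3(n+1)+2}{2^{n+1}p}\Big/\frac{3n+2}{2^{n}p} \to \tfrac{1}{2} < 1$, which already forces the sequence to vanish in the limit.

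I do not expect any substantive obstacle: once the closed formula from the preceding proposition is in hand, the corollary is a one-line application of standard asymptotics. The only point worth stating explicitly is the independence of $p$ from $n$, so that $p$ contributes only a positive constant factor and does not interfere with the limit.
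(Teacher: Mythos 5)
Your proposal is correct and follows the same route the paper intends: the corollary is stated without proof as an immediate consequence of the closed formula $\beta(C_{2^{n}p}\times C_{2})=\frac{3n+2}{2^{n}p}$ from the preceding proposition, and your argument simply supplies the routine justification that the exponential denominator dominates the linear numerator. Both your binomial bound $2^{n}\ge\binom{n}{2}$ and the ratio-test alternative are valid ways to make that explicit.
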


With the expressions that characterize asymptotic behaviors of group families, it is possible to practically immediately determine the degree of cyclicity. From the families of abelian groups above we have the result.

\begin{prop}
Let G be a finite group then
\begin{itemize}
        \item{ for $G = C_{2^{n+1}}\times C_{2}$, \quad $cdeg(C_{2^{n+1}}\times C_{2}) = \frac{2(n+2)}{3n+5};$}
        \item{ for $G = C_{2^{n}p}\times C_{2}$, \quad $cdeg( C_{2^{n}p}\times C_{2}) = \frac{2(n+1)}{3n+2}.$ }
    \end{itemize}
\end{prop}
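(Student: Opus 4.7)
The plan is to apply Theorem \ref{T1}, which gives $cdeg(G)=\alpha(G)/\beta(G)$, and then invoke the $\beta$ formulas already established in Proposition \ref{prop 3.6} and the proposition immediately following it. The work therefore reduces to pinning down $\alpha(G)$ for each of the two families, after which the result is a one-line cancellation.

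For the first family $G=C_{2^{n+1}}\times C_{2}$, I would harvest the cyclic count directly from the case analysis already performed in the proof of Proposition \ref{prop 3.6}. Restricting to $s=0$ there isolates exactly the cyclic subgroups: one of order $1$, three of order $2$ (the three involutions), and two cyclic subgroups of each order $2^{r}$ for $2\le r\le n+1$. Summing gives $c(G)=1+3+2n=2(n+2)$, hence $\alpha(G)=2(n+2)/2^{n+2}$. Dividing by $\beta(G)=(3n+5)/2^{n+2}$ cancels the common denominator and yields $\tfrac{2(n+2)}{3n+5}$.

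For the second family $G=C_{2^{n}p}\times C_{2}$ with $p\ge 3$ prime, I would use the coprime decomposition $G\cong (C_{2^{n}}\times C_{2})\times C_{p}$, together with the multiplicativity of $\alpha$ on direct products of coprime order (Property 2.5 of \cite{Garonzi_2018}). This reduces the computation to $\alpha(C_{2^{n}}\times C_{2})\cdot \alpha(C_{p})$. The first factor is obtained by the same cyclic-subgroup count as before, shifted by one, giving $1+3+2(n-1)=2(n+1)$ cyclic subgroups and hence $2(n+1)/2^{n+1}$; the second factor is simply $\alpha(C_{p})=2/p$. Multiplying gives $\alpha(G)=2(n+1)/(2^{n}p)$, and dividing by $\beta(G)=(3n+2)/(2^{n}p)$ again cancels the denominator to produce $\tfrac{2(n+1)}{3n+2}$.

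The main obstacle is purely combinatorial bookkeeping on the boundary cases of the cyclic count: the extra involutions appearing at order $2$, the truncation at the exponent $2^{n+1}$ of $G$, and the index shift between $C_{2^{n+1}}\times C_{2}$ and $C_{2^{n}}\times C_{2}$ when reusing the argument. Once these are tracked correctly, and one checks that the factorization $(C_{2^{n}}\times C_{2})\times C_{p}$ indeed has coprime components so that multiplicativity of $\alpha$ applies, both equalities follow at once from Theorem \ref{T1} with no further computation.
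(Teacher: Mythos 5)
Your proposal is correct and follows essentially the same route as the paper: both apply Theorem \ref{T1} together with the $\beta$ formulas of Section 4 and reduce the work to computing $\alpha$. The only (immaterial) difference is that you obtain $\alpha$ by directly counting the cyclic subgroups from the case analysis in Proposition \ref{prop 3.6}, whereas the paper strips off the $C_2$ factor via Property 2.5 of \cite{Garonzi_2018} and reads $\alpha$ off a cyclic group; both yield $\alpha(C_{2^{n+1}}\times C_2)=\tfrac{n+2}{2^{n+1}}$ and $\alpha(C_{2^{n}p}\times C_2)=\tfrac{2(n+1)}{2^{n}p}$.
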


\begin{proof}
$\alpha(C_{2^{n+1}}\times C_{2}) = \alpha(C_{2^{n+1}}) = \frac{n+2}{2^{n+1}}$, by the equation \ref{eq. 1} and by the Proposition
\ref{prop 3.6}, 

\begin{equation*}
  cdeg(C_{2^{n+1}}\times C_{2}) = \frac{n+2}{2^{n+1}} \cdot \frac{2^{n+2}}{3n+5} = \frac{2(n+2)}{3n+5}.  
\end{equation*}

$\alpha(C_{2^{n}p}\times C_{2}) = \alpha(C_{2^{n}} \times C_p) = \alpha(C_{2^{n}}) \cdot \alpha(C_p) = \frac{n+1}{2^{n}} \cdot \frac{2}{p}  = \frac{2(n+1)}{2^{n}p}$, in a similar way to the previous item, we have

\begin{equation*}
  cdeg(C_{2^{n}p}\times C_{2}) = \frac{2(n+1)}{2^{n}p} \cdot \frac{2^{n}p}{3n+2} = \frac{2(n+1)}{3n+2}.  
\end{equation*}

\end{proof}

\begin{cor}
\begin{equation*}
\lim_{n \longrightarrow  \infty}    cdeg(C_{2^{n+1}}\times C_{2}) = \lim_{n \longrightarrow  \infty}    cdeg(C_{2^{n}p}\times C_{2}) = \frac{2}{3}
\end{equation*}
\end{cor}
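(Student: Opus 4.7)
The plan is to invoke the previous proposition directly and pass to the limit in each of the two rational expressions it provides. There is no new group-theoretic content needed: the corollary is purely a calculus-of-limits consequence of the closed forms
\[
  \mathrm{cdeg}(C_{2^{n+1}}\times C_2) = \frac{2(n+2)}{3n+5}, \qquad
  \mathrm{cdeg}(C_{2^{n}p}\times C_2) = \frac{2(n+1)}{3n+2},
\]
already established just above.

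First I would treat the dihedral-flavored abelian case $C_{2^{n+1}} \times C_2$. I would rewrite $\frac{2(n+2)}{3n+5}$ by dividing numerator and denominator by $n$, obtaining $\frac{2 + 4/n}{3 + 5/n}$, and let $n \to \infty$ to get $2/3$. Then I would repeat the analogous step for $C_{2^n p}\times C_2$: divide $\frac{2(n+1)}{3n+2}$ top and bottom by $n$ to reach $\frac{2 + 2/n}{3 + 2/n}$, whose limit is again $2/3$. Combining the two identical limiting values yields the chained equality stated in the corollary.

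I do not anticipate any real obstacle here; the only thing to be careful about is matching each limit to the correct formula from the preceding proposition and being explicit that both sequences of rationals are of the form $\frac{an + b}{cn + d}$ with $a/c = 2/3$, so their limit at infinity equals $2/3$ in each case. The proof will therefore be short, essentially a two-line calculation for each family followed by the observation that both limits coincide.
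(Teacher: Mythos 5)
Your proposal is correct and matches the paper's (implicit) reasoning: the corollary follows immediately from the closed forms $\frac{2(n+2)}{3n+5}$ and $\frac{2(n+1)}{3n+2}$ in the preceding proposition, each being of the form $\frac{an+b}{cn+d}$ with $a/c=\frac{2}{3}$. The paper states the corollary without proof, so your short limit computation is exactly the intended argument.
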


\subsection{Hamiltonian case}

\quad \, In \cite{Marius} Theorem 3.2.1 characterizes $cdeg(G)$ where $G$ is Hamiltonian, that is, $G \cong Q_8 \times C_2^{n} \times A$, where $C_2^{n}$ is an elementary abelian 2-group, and $A$ is a torsion abelian group with all elements of odd order. We will now evaluate the expressions of the other functions for this family of groups.

\begin{prop}
   Let $G \cong Q_8 \times C_2^{n} \times A$ be a Hamiltonian group, then $\alpha(G) = \frac{\alpha(A)5}{8}$.
\end{prop}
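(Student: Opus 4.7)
The plan is to reduce $\alpha(Q_8 \times C_2^n \times A)$ to $\alpha(Q_8)$ and $\alpha(A)$ using the multiplicativity of $\alpha$. Since $|Q_8 \times C_2^n| = 2^{n+3}$ is a power of $2$ while $|A|$ is odd (because $A$ is a torsion abelian group whose elements all have odd order), these orders are coprime, so Property 2.5 of \cite{Garonzi_2018} gives
\begin{equation*}
\alpha(G) = \alpha(Q_8 \times C_2^n)\cdot\alpha(A).
\end{equation*}
Hence the whole problem boils down to showing $\alpha(Q_8 \times C_2^n) = 5/8$.

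For that step, I would count cyclic subgroups of $Q_8 \times C_2^n$ by a direct partition by orders. The exponent of $Q_8 \times C_2^n$ is $4$, so every element has order $1$, $2$, or $4$. The element $(q,v)$ has order $\mathrm{lcm}(\mathrm{ord}(q),\mathrm{ord}(v))$, which equals $\mathrm{ord}(q)$ whenever $\mathrm{ord}(q)\ge 2$; only when $q=1$ does the $C_2^n$-factor raise the order. Counting gives: one element of order $1$; the elements $(q,v)$ with $q\in\{1,-1\}$ contribute $2\cdot 2^n - 1 = 2^{n+1}-1$ elements of order $2$ (excluding the identity); and the elements $(q,v)$ with $q\in\{\pm i,\pm j,\pm k\}$ contribute $6\cdot 2^n$ elements of order $4$. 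Since a cyclic group of order $2$ has one generator and one of order $4$ has $\phi(4)=2$ generators,
\begin{equation*}
c(Q_8 \times C_2^n) = 1 + (2^{n+1}-1) + 3\cdot 2^n = 5\cdot 2^n,
\end{equation*}
so $\alpha(Q_8 \times C_2^n) = 5\cdot 2^n /2^{n+3} = 5/8$. Combined with the factorization above, this yields $\alpha(G) = \frac{5\,\alpha(A)}{8}$.

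The only delicate point is the element-order bookkeeping inside the non-abelian factor; but $Q_8$ is small enough that this is routine, once one recognizes that $(\pm 1, v)$ exhausts the elements of $Q_8 \times C_2^n$ of order at most $2$. An alternative route that sidesteps the case analysis is to first establish the lemma $\alpha(H\times C_2)=\alpha(H)$ for any finite $H$, which follows because $\phi(\mathrm{lcm}(\mathrm{ord}(h),2))=\phi(\mathrm{ord}(h))$ for every $h\in H$ (regardless of parity of $\mathrm{ord}(h)$) and hence $c(H\times C_2)=2c(H)$ by the identity $c(K)=\sum_{k\in K}\phi(\mathrm{ord}(k))^{-1}$; iterating $n$ times and then applying coprime multiplicativity to separate $A$ delivers the same conclusion directly from $\alpha(Q_8)=5/8$.
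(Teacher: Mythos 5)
Your proposal is correct and follows essentially the same route as the paper: split off $A$ by coprime multiplicativity of $\alpha$, then reduce $Q_8\times C_2^{n}$ to $Q_8$ and use $\alpha(Q_8)=\tfrac{5}{8}$. The only difference is that where the paper simply invokes Property 2.5 of \cite{Garonzi_2018} for the step $\alpha(Q_8\times C_2^{n})=\alpha(Q_8)$, you verify it by a direct (and correct) element-order count giving $c(Q_8\times C_2^{n})=5\cdot 2^{n}$, which makes the argument self-contained but does not change its substance.
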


\begin{proof}
\begin{align*}
    \alpha(G) = \alpha(Q_8 \times C_2^{n} \times A) &= \alpha(Q_8 \times  C_2^{n}) \alpha(A) = \alpha(Q_8)\alpha(A) = \frac{\alpha(A)5}{8}
\end{align*}
\end{proof}

\begin{cor}
Let $G \cong Q_8 \times C_2^{n} \times A$ be a Hamiltonian group, then $\alpha(G) < \frac{15}{32}$.
\end{cor}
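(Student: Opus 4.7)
The plan is to apply the identity $\alpha(G) = \tfrac{5\alpha(A)}{8}$ from the previous proposition and then bound $\alpha(A)$ separately. More precisely, it suffices to prove $\alpha(A) < \tfrac{3}{4}$ for every nontrivial torsion abelian group $A$ whose elements all have odd order, because then $\alpha(G) = \tfrac{5}{8}\alpha(A) < \tfrac{5}{8}\cdot\tfrac{3}{4} = \tfrac{15}{32}$. The elementary abelian $2$-factor $C_2^{\,n}$ needs no further attention, since it was already absorbed at the previous step through Property 2.5 of $\alpha$ in \cite{Garonzi_2018}.

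The bound on $\alpha(A)$ is the only step that actually requires an argument, and the natural approach is a double count. Since $|A|$ is odd, every nontrivial cyclic subgroup $C\leq A$ has odd order $\ge 3$ and hence $\phi(|C|)\ge 2$ generators; meanwhile each nonidentity element of $A$ generates exactly one such subgroup. Summing over nontrivial cyclic subgroups then gives
\[
|A|-1 \;=\; \sum_{\substack{C\leq A\text{ cyclic}\\ C\neq\{e\}}}\phi(|C|) \;\geq\; 2\bigl(c(A)-1\bigr),
\]
so $\alpha(A) = c(A)/|A| \leq (|A|+1)/(2|A|) \leq \tfrac{2}{3}$ whenever $|A|\ge 3$, with the extremal case realised by $A=C_3$. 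In particular $\alpha(A) < \tfrac{3}{4}$.

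Putting the two ingredients together yields $\alpha(G) = \tfrac{5}{8}\alpha(A) \leq \tfrac{5}{12} < \tfrac{15}{32}$, which is the claimed bound and in fact slightly stronger. I expect the main obstacle to be simply spotting the right double-counting identity; once that is in hand, the remaining work is a routine comparison of fractions, and no structure theory of abelian groups (Sylow decomposition, invariant factors, etc.) is needed.
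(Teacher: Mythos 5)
Your proof is correct, but it follows a genuinely different route from the paper's. The paper obtains the strict bound $\alpha(A) < \tfrac{3}{4}$ by invoking the classification results of Garonzi--Lima (their Theorem 5 together with the characterization of groups attaining $\alpha = 1$): since those show that any group with $\alpha \ge \tfrac34$ must be a $2$-group, a nontrivial odd-order abelian $A$ cannot attain $\tfrac34$, and multiplying by $\alpha(Q_8)=\tfrac58$ gives $\tfrac{15}{32}$. You instead prove the needed bound on $\alpha(A)$ from scratch by the double count $|A|-1=\sum_{C\neq\{e\}}\phi(|C|)\ge 2\bigl(c(A)-1\bigr)$, valid because every nontrivial cyclic subgroup of odd order has at least two generators; this yields $\alpha(A)\le\tfrac23$ (sharp at $A=C_3$) and hence the stronger conclusion $\alpha(G)\le\tfrac{5}{12}<\tfrac{15}{32}$. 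Your argument is self-contained, elementary, and actually identifies the true extremal value $\tfrac{5}{12}$, whereas the paper's argument leans on an external classification and only delivers the weaker $\tfrac{15}{32}$; the paper's route, on the other hand, requires no computation and fits the article's general strategy of recycling known structural theorems. One caveat applies equally to both proofs and indeed to the statement itself: the hypothesis must exclude $A=\{e\}$, since for trivial $A$ one gets $\alpha(G)=\alpha(Q_8)=\tfrac58>\tfrac{15}{32}$; you state the nontriviality assumption explicitly, which the corollary as written does not.
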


\begin{proof}
Since $A$ has odd order, then this group must not have involutions and therefore cannot be of the elementary abelian 2-group type, which are the only groups for which $\alpha(G)$ reaches the value of 1. According to Theorem 5 of \cite{Garonzi_2018}, if a nilpotent group has $\alpha(G)= \frac{3}{4}$, then it must be a 2-group. Since a finite abelian group of odd (non-trivial) order cannot be a 2-group, the equality to $\frac{3}{4}$ is impossible for this class of groups, thus establishing this value as a strict upper bound. Multiplying by $\alpha(Q_{8})$ follows the result.
 \end{proof}

\begin{cor}
Let $G \cong Q_8 \times C_2^{n} \times A$ be a Hamiltonian group such that $A = C_{p}^{m}$, for prime p, where $p \geq 3$, then

\begin{equation*}
    \lim_{m \longrightarrow \infty} \alpha( Q_8 \times  C_2^{n} \times C_{p}^{m}) = \frac{5}{8(p-1)}
\end{equation*}
\end{cor}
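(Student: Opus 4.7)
The plan is to reduce the limit to a computation involving only $\alpha(C_p^m)$ and then evaluate it directly. By the preceding proposition applied with $A = C_p^m$, we have $\alpha(G) = \frac{5}{8}\,\alpha(C_p^m)$, so the whole task reduces to showing
\[
\lim_{m \to \infty} \alpha(C_p^m) \;=\; \frac{1}{p-1}.
\]

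To evaluate this limit, I would count the cyclic subgroups of $C_p^m$ explicitly. Since $C_p^m$ has exponent $p$, every non-trivial cyclic subgroup has order $p$. Each such subgroup contains $p-1$ generators, and the total number of elements of order $p$ in $C_p^m$ is $p^m - 1$, so the number of cyclic subgroups of order $p$ equals $(p^m-1)/(p-1)$. Including the trivial subgroup,
\[
c(C_p^m) \;=\; 1 + \frac{p^m - 1}{p - 1} \;=\; \frac{p^m + p - 2}{p - 1},
\]
and therefore
\[
\alpha(C_p^m) \;=\; \frac{c(C_p^m)}{p^m} \;=\; \frac{p^m + p - 2}{(p-1)\,p^m}.
\]

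Taking $m \to \infty$ in this closed form gives $\frac{1}{p-1}$, and multiplying by the constant factor $\frac{5}{8}$ from the preceding proposition yields the claimed value $\frac{5}{8(p-1)}$. There is no genuine obstacle here: the only step that requires care is the counting of order-$p$ subgroups, which follows from the standard double-counting of elements of order $p$ against generators per subgroup. Everything else is immediate from the already established identity $\alpha(Q_8 \times C_2^n \times A) = \frac{5}{8}\alpha(A)$ and a routine limit.
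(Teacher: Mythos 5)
Your proposal is correct, and its first step coincides exactly with the paper's: both reduce the problem via the identity $\alpha(Q_8 \times C_2^{n} \times A) = \tfrac{5}{8}\,\alpha(A)$ from the preceding proposition, so that everything hinges on showing $\lim_{m\to\infty}\alpha(C_p^m) = \tfrac{1}{p-1}$. Where you diverge is in how that limit is evaluated. The paper invokes Theorem 3 of Garonzi--Lima as a black box, which gives the limit as $\tfrac{1}{\phi(\exp(C_p^m))} = \tfrac{1}{\phi(p)} = \tfrac{1}{p-1}$; that route is shorter and generalizes immediately to limits of $\alpha(A^m)$ for other abelian $A$. You instead compute $c(C_p^m)$ from scratch by the standard double count: every nontrivial cyclic subgroup has order $p$ and contains $p-1$ generators, so $c(C_p^m) = 1 + \tfrac{p^m-1}{p-1} = \tfrac{p^m+p-2}{p-1}$, whence $\alpha(C_p^m) = \tfrac{p^m+p-2}{(p-1)p^m} \to \tfrac{1}{p-1}$. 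This is entirely correct and has the advantage of being self-contained and of producing an explicit closed form for $\alpha(C_p^m)$ at every finite $m$, not just in the limit; the cost is that it exploits the special structure of elementary abelian groups (exponent $p$) and would not transfer verbatim to other families the way the cited theorem does.
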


\begin{proof}
From proposition \ref{prop. 4.7} we have $\alpha(Q_8 \times C_2^{n} \times C_{p}^{m}) = \frac{5}{8}\alpha(C_{p}^{m})$. Taking the limit and applying Theorem 3 of \cite{Garonzi_2018} we have

\begin{equation*}
\lim_{m \longrightarrow \infty}\alpha( Q_8 \times  C_2^{n} \times C_{p}^{m}) = \frac{5}{8}\lim_{m \longrightarrow \infty}\alpha(C_{p}^{m}) = 
 \frac{5}{8} \frac{1}{\phi(\exp(C_{p}^{m}))} =  \frac{5}{8} \frac{1}{\phi(p)} =  \frac{5}{8(p-1)}    
\end{equation*}

\end{proof}

\begin{prop}\label{prop. 4.7}
  Let $G \cong Q_8 \times C_2^{n} \times A$ be a Hamiltonian group, then 

\begin{equation*}
    \beta(G) = \beta(A) \cdot\frac{b_{n,2}}{2^{n+3}}
\end{equation*}

\noindent{\text{where:} $
b_{n,2} = 2^{n+2} + 1 + 8 \sum_{\alpha=0}^{n-2} \left(2^{n-\alpha} - 2^{2\alpha+1} + 2^{\alpha}\right) a_{\alpha,2} + 2^{n+2} a_{n-1,2} + a_{n,2},$
\text{e} \( a_{\alpha,2} = |L(C_2^\alpha)| \), \text{for all} \(\alpha \in \mathbb{N}^*\).}
\end{prop}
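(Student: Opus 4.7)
The plan is to reduce the problem to counting subgroups of the $2$-group $Q_8 \times C_2^n$, perform this count via Goursat's lemma, and then reshape the resulting expression into the stated form $b_{n,2}$.

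For the reduction, I would exploit the coprime orders of $A$ and $Q_8 \times C_2^n$: since $|A|$ is odd while $|Q_8 \times C_2^n| = 2^{n+3}$, the Sylow $2$-subgroup and the Hall $2'$-subgroup of any $H \le G$ are both characteristic in $H$, so $H$ decomposes uniquely as $H' \times H''$ with $H' \le Q_8 \times C_2^n$ and $H'' \le A$. Consequently $|L(G)| = |L(Q_8 \times C_2^n)| \cdot |L(A)|$, and dividing by $|G| = 2^{n+3}|A|$ yields
\[
\beta(G) \;=\; \beta(A) \cdot \frac{|L(Q_8 \times C_2^n)|}{2^{n+3}},
\]
so the task reduces to showing $|L(Q_8 \times C_2^n)| = b_{n,2}$.

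For the count itself, I would invoke Goursat's lemma: subgroups of $Q_8 \times C_2^n$ biject with quintuples $(H_1, K_1, H_2, K_2, \varphi)$ where $K_1 \trianglelefteq H_1 \le Q_8$, $K_2 \le H_2 \le C_2^n$, and $\varphi : H_1/K_1 \xrightarrow{\sim} H_2/K_2$. Since $C_2^n$ is elementary abelian, the common quotient must be isomorphic to $C_2^k$ for some $k$. Every subgroup of $Q_8$ is normal, and the admissible pairs $(H_1, K_1)$ are: six with trivial quotient; seven with quotient $C_2$, namely $(\langle -1\rangle, 1)$ together with $(\langle \ell\rangle, \langle -1\rangle)$ and $(Q_8, \langle \ell\rangle)$ for $\ell \in \{i,j,k\}$; and the single pair $(Q_8, \langle -1\rangle)$ with quotient $C_2 \times C_2$. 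For each Goursat pair I would count the compatible $(H_2, K_2, \varphi)$ on the $C_2^n$ side using Gaussian binomial coefficients (a pair with $\dim K_2 = \alpha$ and $H_2/K_2 \cong C_2^k$ is counted $\binom{n}{\alpha+k}_2 \binom{\alpha+k}{\alpha}_2$ times), multiplied by $|\mathrm{GL}_k(\mathbb{F}_2)|$. Collecting contributions, the term $a_{n,2}$ arises from $H_1 = K_1 = 1$, where each $H_2 \le C_2^n$ contributes the subgroup $\{1\} \times H_2$; the boundary terms $2^{n+2}+1$ and $2^{n+2} a_{n-1,2}$ should come from the pair $(Q_8, \langle -1\rangle)$, weighted by $|\mathrm{GL}_2(\mathbb{F}_2)| = 6$, together with the three $(Q_8, \langle \ell\rangle)$ pairs; and the interior sum $8 \sum_\alpha (2^{n-\alpha} - 2^{2\alpha+1} + 2^\alpha) a_{\alpha,2}$ should gather the remaining contributions indexed by $\alpha = \dim K_2$, after invoking the expansion $a_{\alpha,2} = \sum_k \binom{\alpha}{k}_2$ to repackage the Gaussian-binomial sums.

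The main obstacle will be precisely this last repackaging. The raw Goursat count collapses most naturally into a compact expression of the form $c_1 a_{n,2} + c_2(n) a_{n-1,2} + c_3(n) a_{n-2,2}$ with polynomial coefficients in $2^n$, and recasting it as the stated sum over $\alpha$ with coefficient $(2^{n-\alpha} - 2^{2\alpha+1} + 2^\alpha)$ demands careful $q$-binomial bookkeeping, in particular a precise accounting of how contributions from $\dim H_2 = \alpha + 1$ and $\dim H_2 = \alpha + 2$ interact with the three-fold symmetry among the $C_4$-subgroups of $Q_8$. A prudent strategy is to verify the identity for small $n$ and then establish it in general by induction on $n$, using the Galois-number recurrence $a_{n,2} = 2 a_{n-1,2} + (2^{n-1} - 1) a_{n-2,2}$ to isolate the effect of adjoining a new basis vector to $C_2^n$.
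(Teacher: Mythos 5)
Your route is genuinely different from the paper's. The paper never counts subgroups of $Q_8\times C_2^n$ at all: it writes $\beta(G)=\alpha(G)/\mathrm{cdeg}(G)$ via Theorem \ref{T1}, substitutes $\alpha(G)=\tfrac{5}{8}\alpha(A)$ from the preceding proposition and the formula $\mathrm{cdeg}(Q_8\times C_2^{n}\times A)=\dfrac{5\cdot 2^{n}}{b_{n,2}}\,\mathrm{cdeg}(A)$ from Theorem 3.2.1 of \cite{Marius} (where $b_{n,2}$ originates as the subgroup count of the $2$-part), and the identity falls out in three lines of algebra. Your reduction is correct and in fact cleaner in spirit: since $G$ is nilpotent with $2$-part $Q_8\times C_2^{n}$ and odd part $A$, every subgroup splits as a product of its primary components, so $\beta(G)=\beta(A)\cdot|L(Q_8\times C_2^{n})|/2^{n+3}$ is immediate, and your inventory of Goursat sections of $Q_8$ (six trivial quotients, seven quotients $C_2$, one quotient $C_2\times C_2$, with $C_4$ and $Q_8$ quotients excluded against an elementary abelian factor) is accurate; the resulting count checks out against $b_{0,2}=6$, $b_{1,2}=19$, $b_{2,2}=78$.

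The gap is the one you flag yourself: the identity
\[
6\,a_{n,2}+7\sum_{k}\binom{n}{k+1}_{2}\binom{k+1}{k}_{2}+6\sum_{k}\binom{n}{k+2}_{2}\binom{k+2}{k}_{2}=b_{n,2}
\]
is the entire content of the proposition on your route, and you only describe a program (small cases plus induction via the Galois-number recurrence) rather than carry it out. As written, the proof is incomplete at precisely the hard step. You should either execute that $q$-binomial computation in full, or — as the paper does — avoid it by citing the formula for $\mathrm{cdeg}$ of Hamiltonian groups from \cite{Marius}, in which $b_{n,2}$ is already identified with $|L(Q_8\times C_2^{n})|$; combining that citation with your (correct) primary decomposition would give a complete and self-contained argument.
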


\begin{proof}
Applying Theorem \ref{T1}, followed by proposition \ref{prop. 4.7} and Theorem 3.2.1 of \cite{Marius}, we have

\begin{align*}
   \beta(G) = \frac{\alpha(G)}{cdeg(G)} = \frac{\alpha(A)5}{8} \cdot \frac{b_{n,2}}{5 \cdot2^{n}\cdot cdeg(A)} = \frac{\alpha(A) \cdot b_{n,2}}{2^{n+3} \cdot cdeg(A)}
   &= \frac{c(A)}{|A|} \cdot b_{n,2} \cdot\frac{1}{2^{n+3}} \cdot \frac{|L(A)|}{c(A)}\\
   &= \frac{|L(A)|}{|A|} \cdot\frac{b_{n,2}}{2^{n+3}}\\
   &= \beta(A) \cdot\frac{b_{n,2}}{2^{n+3}}\\
 \end{align*}
\end{proof}

Note that for $ndeg(G) = 1$, when $G \cong Q_8 \times C_n^2 \times A$, since Hamiltonian groups have all normal subgroups.

\newpage
\section{Supersolvable Groups}
\subsection{Dicyclic case}

\quad \, Let us now evaluate the functions for some subfamilies of a class of groups that is not nilpotent.
This class is largely characterized by being described as a semidirect product, which sometimes makes the structural analysis of the groups more complex. This class, however,
has an interesting property regarding the quantitative description of its subgroups.

If G is dicyclic such that $|G| = 4n$, then $|L(G)| = \tau(2n) + \sigma(n)$. With this property,
it becomes very practical to determine the images of $\beta$. Let us study some subfamilies of dicyclic groups and verify the asymptotic behavior for $\beta$.

\begin{theorem}

Let $G$ be a finite dicyclic group where $|G| = 4n$, then

\begin{equation*}
        \beta(G) = \frac{\tau(2n) + \sigma(n)}{4n}.
\end{equation*}
\end{theorem}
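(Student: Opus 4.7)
The plan is essentially to apply the definition of $\beta$ to the subgroup-count formula for dicyclic groups that is recalled in the paragraph immediately preceding the theorem. That is, by definition $\beta(G) = |L(G)|/|G|$, so once we have $|L(G)| = \tau(2n) + \sigma(n)$ and $|G| = 4n$, the identity $\beta(G) = (\tau(2n) + \sigma(n))/(4n)$ is immediate.

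The only non-trivial ingredient is the formula $|L(\mathrm{Dic}_n)| = \tau(2n) + \sigma(n)$ itself, which the excerpt states as a known fact. If one wished to reprove it rather than cite it, I would proceed from the presentation $\mathrm{Dic}_n = \langle a,b \mid a^{2n} = 1,\ b^2 = a^n,\ b^{-1}ab = a^{-1}\rangle$ and split $L(\mathrm{Dic}_n)$ into two disjoint families: the subgroups contained in the cyclic group $\langle a\rangle \cong C_{2n}$, which contribute exactly $\tau(2n)$; and the subgroups not contained in $\langle a\rangle$, which necessarily contain some element of the form $a^i b$ (of order $4$). A standard parametrization shows that the latter family is indexed by pairs $(d,i)$ with $d \mid n$ and $0 \le i < d$, yielding $\sum_{d \mid n} d = \sigma(n)$ subgroups.

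The main (and only) potential obstacle is therefore that enumeration, particularly keeping track of conjugate subgroups to avoid double-counting in the second family; but since the paper treats the count $\tau(2n) + \sigma(n)$ as an established property of dicyclic groups, the proof of the theorem itself reduces to a single substitution step.
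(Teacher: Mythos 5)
Your proof matches the paper's: the paper likewise treats the count $|L(\mathrm{Dic}_n)|=\tau(2n)+\sigma(n)$ as a known property stated just before the theorem, and its proof is exactly the one-line substitution into the definition $\beta(G)=|L(G)|/|G|$. Your optional sketch of the enumeration (the $\tau(2n)$ subgroups of $\langle a\rangle$ plus the $\sigma(n)$ subgroups of the form $\langle a^d, a^i b\rangle$ with $d\mid n$, $0\le i<d$) is correct but goes beyond what the paper records.
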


\begin{proof}
It follows directly from the definition of $\beta$.
\end{proof}

\begin{prop} \label{prop 5.2}
If G is finite with p and q distinct primes, then, then

\begin{itemize}
    \item{$ \beta(C_{p^{n}} \rtimes Q_{2^{m}}) = \frac{m(n+1) + \left( \frac{p^{n+1}-1}{p-1}\right)(2^{m-1} -1 )}{2^{m} p^{n}}$, for $m \geq 3$};
    \item{$\beta (C_{q^m} \rtimes (C_{p^{n}} \rtimes C_4)) = \frac{2(n+1)(m+1) + \left( \frac{p^{n+1} - 1}{p - 1} \right) \left( \frac{q^{m+1} - 1}{q - 1} \right)}{4q^{m}p^{n}}$, for $p \neq q$}.
    \item{$\beta (C_{q^m} \rtimes (C_{p^{n}} \rtimes Q_{2^r})) = \frac{r(n + 1)(m + 1) + (2^{r-1} - 1) \left( \frac{p^{n+1} - 1}{p - 1} \right) \left( \frac{q^{m+1} - 1}{q - 1}\right)}{2^{r}q^{m}p^{n}}$}, for $r \geq 3$, $p\neq q$.
\end{itemize}
    
\end{prop}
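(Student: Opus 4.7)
The plan is to compute $|L(G)|$ directly for each of the three groups and then divide by $|G|$. The central tool is Schur--Zassenhaus: in each case $G$ is an iterated semidirect product whose Sylow subgroups have pairwise coprime orders, so every $H\leq G$ factors (up to inner conjugation) as $H=H_pH_qH_2$, where $H_p=H\cap C_{p^n}$, $H_q=H\cap C_{q^m}$, and $H_2$ is a Hall $2$-subgroup. Since the cyclic Sylow $p$- and $q$-subgroups are normal and have only characteristic subgroups, $H_p$ and $H_q$ are automatically normal in $G$, which removes the usual difficulty with non-normal complements and reduces the enumeration to counting the $G$-orbits of the $H_2$-part.

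For item~1, Proposition~\ref{prop 3.4} shows that the $|L(Q_{2^m})|=2^{m-1}+m-1$ subgroups of $Q_{2^m}$ split into the $m$ subgroups of the maximal cyclic subgroup $\langle x\rangle\cong C_{2^{m-1}}$ and $2^{m-1}-1$ remaining subgroups (each containing the center). Because $\mathrm{Aut}(C_{p^n})$ is cyclic while $Q_{2^m}$ has abelianization $C_2\times C_2$, the action $\phi\colon Q_{2^m}\to\mathrm{Aut}(C_{p^n})$ factors through an order-$2$ quotient whose kernel is $\langle x\rangle$; so $\langle x\rangle$ acts trivially on $C_{p^n}$ while every element outside $\langle x\rangle$ inverts it. I would then enumerate subgroups of $G$ by pairs $(H_p,H_2)$. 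When $H_2\leq\langle x\rangle$, the conjugation formula $(a,1)(0,h)(a,1)^{-1}=(a-\phi(h)(a),h)$ collapses to $(0,h)$, so $H_pH_2$ is normal and contributes one subgroup per pair, yielding $m(n+1)$. When $H_2\not\leq\langle x\rangle$, choosing $h\in H_2\setminus\langle x\rangle$ gives $(a,1)(0,h)(a,1)^{-1}=(2a,h)$, and since $p$ is odd the map $a\mapsto 2a$ is a bijection on $C_{p^n}$, so the $C_{p^n}$-stabilizer of $H_pH_2$ is exactly $H_p$; the orbit has size $p^n/|H_p|$, and summing gives $\sum_{d\mid p^n}d=(p^{n+1}-1)/(p-1)$ per non-cyclic class, contributing the second term of the formula.

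Items~2 and~3 would follow by iterating this strategy with the extra prime $q$. The Hall $2$-subgroup $K\in\{C_4,Q_{2^r}\}$ exhibits the same dichotomy: its subgroups contained in the maximal cyclic subgroup number $2$ resp.\ $r$, and the remaining subgroups number $1$ resp.\ $2^{r-1}-1$. The action of the complement $C_{p^n}\rtimes K$ on $C_{q^m}$ again factors through an order-$2$ quotient whose kernel contains $C_{p^n}$ together with the maximal cyclic subgroup of $K$. A subgroup $H\leq G$ decomposes as $H_qH_pH_2$; if $H_2$ lies in the common kernel of both actions then $H$ is normal and each triple $(H_p,H_q,H_2)$ yields one subgroup, producing the $2(n+1)(m+1)$ resp.\ $r(n+1)(m+1)$ term. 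Otherwise, independent centralizer computations on $C_{p^n}$ and $C_{q^m}$ (their independence coming from $\gcd(p,q)=1$) show that the $(C_{p^n}\times C_{q^m})$-stabilizer of $H_qH_pH_2$ is $H_qH_p$, so each non-kernel $H_2$ contributes $\sigma(p^n)\sigma(q^m)=\tfrac{p^{n+1}-1}{p-1}\cdot\tfrac{q^{m+1}-1}{q-1}$ subgroups, which sums to the second terms in items~2 and~3.

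The principal obstacle is the simultaneous centralizer computation in items~2 and~3: one must verify that the element outside the maximal cyclic subgroup of $H_2$ acts fixed-point-freely on both $C_{p^n}/H_p$ and $C_{q^m}/H_q$, and that the $C_{p^n}$- and $C_{q^m}$-orbits combine independently to produce the claimed product $p^n q^m/(|H_p|\,|H_q|)$. The first is immediate from inversion having no nontrivial fixed point modulo either subgroup (valid for odd primes $p,q$); the second uses coprimality of orders to decouple the two actions, so that the diagonal $C_{p^n}\times C_{q^m}$-action on the coset space has stabilizer exactly $H_pH_q$. Once these centralizer facts are in place, the arithmetic collapses to the displayed closed-form expressions after dividing by $|G|=2^rq^mp^n$ (or the analogous order).
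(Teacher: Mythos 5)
Your argument is correct in outline, but it takes a genuinely different route from the paper. The paper's proof is purely arithmetic: it observes that each of these groups is dicyclic of order $4k$ (with $k=2^{m-2}p^n$, $k=p^nq^m$, $k=2^{r-2}p^nq^m$ respectively), invokes the known count $|L(\mathrm{Dic}_k)|=\tau(2k)+\sigma(k)$ stated just before Theorem 5.1, and then expands $\tau$ and $\sigma$ using multiplicativity. You instead re-derive the subgroup count structurally: Schur--Zassenhaus splits every $H$ as $(H\cap N)\rtimes H_2$ over the normal cyclic Hall subgroup $N$, the complements for a fixed pair $(H\cap N,\,\overline{H})$ form a single $N$-conjugacy orbit of size $|N|/|H\cap N|$ when $\overline{H}$ acts by inversion (no nontrivial fixed points in odd order) and a single subgroup when $\overline{H}$ lies in the acting kernel, and summing over $H\cap N$ produces exactly the $\sigma$-terms while the kernel case produces the $\tau$-terms. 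This is essentially a proof of the dicyclic formula itself in these cases, so your route is more self-contained, at the cost of being longer and still only sketched for items 2 and 3. Two small points to tighten: the identification of the acting kernel as the maximal cyclic subgroup $\langle x\rangle$ is not forced by the notation $C_{p^n}\rtimes Q_{2^m}$ (the kernel of a map to the cyclic group $\mathrm{Aut}(C_{p^n})$ could a priori be either quaternion maximal subgroup), so you should say explicitly that this is the choice making the group dicyclic, which is the structure the proposition presupposes; and calling the $2^{m-1}-1$ subgroups not contained in $\langle x\rangle$ the ``non-cyclic class'' is a misnomer (e.g.\ $\langle y\rangle\cong C_4$ is cyclic), though your count of them is right.
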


\begin{proof}

First, we simplify \(2 \cdot (p^n \cdot 2^{m-2})\): $2 \cdot p^n \cdot 2^{m-2} = 2^{1 + m - 2} \cdot p^n = 2^{m-1} \cdot p^n$. Thus, the original expression becomes: $\tau(2^{m-1} \cdot p^n) + \sigma(p^n \cdot 2^{m-2})$. Since \(\tau\) is multiplicative and \(2\) and \(p\) are distinct primes, we have: $\tau(2^{m-1} \cdot p^n) = \tau(2^{m-1}) \cdot \tau(p^n)$. We know that for a prime \(q\) raised to a power \(k\), \(\tau(q^k) = k + 1\). Therefore: $\tau(2^{m-1}) = (m - 1) + 1 = m$ and $\tau(p^n) = n + 1$. Therefore, $\tau(2^{m-1} \cdot p^n) = m \cdot (n + 1)$.
Since \(\sigma\) is multiplicative and \(p\) and \(2\) are distinct primes, we have:

\begin{align*}
\sigma(p^n \cdot 2^{m-2}) &= \sigma(p^n) \cdot \sigma(2^{m-2})\\
                          &=  \left( \frac{p^{n+1} - 1}{p - 1} \right) \cdot \frac{2^{(m-2)+1} - 1}{2 - 1} \\
                          &= \left( \frac{p^{n+1} - 1}{p - 1} \right) \cdot   2^{m-1} - 1.\\
\end{align*}

Then it follows
\[
|L(C_{p^{n}} \rtimes Q_{2^{m}})| = m(n + 1) + \left(\frac{p^{n+1} - 1}{p - 1}\right)(2^{m-1} - 1).
\]

Now for $G = C_{q^{m}} \rtimes (C_{p^{n}} \rtimes C_4)$. Note that the number $2 \cdot p^n \cdot q^m$ has decomposition: $2^1 \cdot p^n \cdot q^m$, therefore $\tau = (1+1)(n+1)(m+1) = 2(n+1)(m+1)$. Now just do:

\begin{align*}
    \sigma(p^n \cdot q^m) = \sigma(p^n) \cdot \sigma(q^m) = \left( \frac{p^{n+1} - 1}{p - 1} \right) \left( \frac{q^{m+1} - 1}{q - 1} \right)
\end{align*}

Therefore, 

$$
|L(C_{q^{m}} \rtimes (C_{p^{n}} \rtimes C_4))| =
2(n+1)(m+1) + \left( \frac{p^{n+1} - 1}{p - 1} \right) \left( \frac{q^{m+1} - 1}{q - 1} \right).
$$

For $\beta (C_{q^m} \rtimes (C_{p^{n}} \rtimes Q_{2^r}))$ it suffices to proceed in a manner analogous to the previous cases.
\end{proof}

\begin{cor}
  \begin{itemize}[label=$\circ$]
Considering the limits of the proposition \ref{prop 5.2} we have
  
  \item{$ \lim_{n\longrightarrow \infty}\beta(C_{p^{n}} \rtimes Q_{2^{m}}) = \frac{p}{p-1} (\frac{1}{2} -\frac{1}{2^{m}}) \,  \text{for m} \geq 3$};
    \item{$\lim_{n\longrightarrow \infty}\beta (C_{q^m} \rtimes (C_{p^{n}} \rtimes C_4)) = \frac{p}{4(p - 1)(q - 1)} \cdot \frac{q^{m+1} - 1}{q^m}$};
    \item{$\lim_{n\longrightarrow \infty} \beta (C_{q^m} \rtimes (C_{p^{n}} \rtimes Q_{2^r})) =  \left( \frac{2^{r-1} - 1}{2^r} \right) \cdot \frac{p}{p - 1} \cdot \frac{q^{m+1}}{q^m(q - 1)}
$, for $r \geq 3$};

  \item{$\lim_{m\longrightarrow \infty}\beta(C_{p^{n}} \rtimes Q_{2^{m}}) = \frac{1}{2p^{n}} \left( \frac{p^{n+1}-1}{p-1} \right)$};
    \item{$\lim_{m\longrightarrow \infty}\beta (C_{q^m} \rtimes (C_{p^{n}} \rtimes C_4)) = \frac{q}{4(p - 1)(q - 1)} \cdot \frac{p^{n+1} - 1}{p^n}$}.
    \item{$\lim_{m\longrightarrow \infty} \beta (C_{q^m} \rtimes (C_{p^{n}} \rtimes Q_{2^r})) = \frac{(2^{r-1} - 1)(p^{n+1} - 1)q}{2^r p^n (p - 1)(q - 1)}$, for $r \geq 3$};
    \item{$\lim_{r\longrightarrow \infty} \beta (C_{q^m} \rtimes (C_{p^{n}} \rtimes Q_{2^r})) = \frac{\left( \frac{p^{n+1} - 1}{p - 1} \right) \left( \frac{q^{m+1} - 1}{q - 1} \right)}{2 q^{m} p^{n}}$}.
\end{itemize}
\end{cor}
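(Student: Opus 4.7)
The plan is to substitute the closed-form expressions from Proposition \ref{prop 5.2} directly into each limit and evaluate by isolating the dominant term in the numerator relative to the denominator. Each $\beta$ formula splits naturally as a sum of two fractions: one of the form $\text{(polynomial in }n,m,r\text{)}/\text{(exponential in the limit variable)}$, which will vanish, and one of the form $\text{(product of geometric sums)}/\text{(exponential)}$, which carries all the surviving mass.

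Concretely, for the first family write
\[
\beta(C_{p^{n}} \rtimes Q_{2^{m}})
  \;=\; \frac{m(n+1)}{2^{m}p^{n}}
     \;+\; \frac{1}{2^{m}p^{n}}\cdot\frac{p^{n+1}-1}{p-1}\cdot(2^{m-1}-1).
\]
With $m \geq 3$ fixed and $n\to\infty$, the first summand tends to $0$ (since $p^n$ grows faster than any polynomial in $n$), while the second summand behaves like $\frac{p^{n+1}}{(p-1)p^{n}}\cdot\frac{2^{m-1}-1}{2^{m}} = \frac{p}{p-1}\bigl(\frac{1}{2}-\frac{1}{2^{m}}\bigr)$. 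With $n$ fixed and $m\to\infty$, the first summand again vanishes and $(2^{m-1}-1)/2^{m}\to 1/2$, producing $\frac{1}{2p^{n}}\cdot\frac{p^{n+1}-1}{p-1}$. This matches the two claimed limits for the first family.

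For the second and third families the bookkeeping is identical: in each case the numerator is a sum of a multiplicative polynomial like $2(n+1)(m+1)$ or $r(n+1)(m+1)$ and a product of geometric sums multiplied by a factor like $(2^{r-1}-1)$; the denominator is $4\,q^{m}p^{n}$ or $2^{r}q^{m}p^{n}$. Whichever of the three variables tends to infinity, the polynomial piece decays (exponential beats polynomial) and one factor inside the geometric-sum piece is well approximated by its leading term, namely $(p^{n+1}-1)/(p-1)\sim p^{n+1}/(p-1)$ as $n\to\infty$, $(q^{m+1}-1)/(q-1)\sim q^{m+1}/(q-1)$ as $m\to\infty$, and $(2^{r-1}-1)/2^{r}\to 1/2$ as $r\to\infty$. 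Substituting and simplifying cancellations with the corresponding factor in the denominator yields each of the stated limits.

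The main obstacle is not analytic but purely organizational: there are seven limits across three formulas and three limit variables, and care is needed to keep track of which variables are held fixed, which factors cancel against the denominator, and which approximations (leading term of a geometric sum versus full geometric-sum expression) should be applied. The argument reduces to the two elementary facts $\lim_{k\to\infty} k^{c}/a^{k}=0$ for $a>1$ and $\lim_{k\to\infty}(a^{k+1}-1)/(a^{k}(a-1)) = a/(a-1)$, applied systematically to each summand.
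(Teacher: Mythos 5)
Your proposal is correct and takes essentially the same approach as the paper, which states this corollary without a separate proof as a direct limit evaluation of the closed forms in Proposition \ref{prop 5.2} --- exactly the term-by-term substitution you carry out. One small point worth flagging: in the third bullet ($n\to\infty$ with $m,r$ fixed) your method correctly produces a factor $q^{m+1}-1$ in the numerator, whereas the corollary as printed writes $q^{m+1}$; this appears to be a typo in the paper's statement rather than a defect in your argument.
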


\begin{ex}\label{Ex 5.4}
Using the mathematical expressions above, it is possible to obtain the following expressions

\begin{table}[h!]
\centering
  \begin{tabular}{lccccc}
    \hline
    $G$ & $\beta(G)$ & $\alpha(G)$ & $cdeg(G)$ & $ndeg(G)$ \\
    \hline
    $C_{p^n} \rtimes C_{4}$ & $\frac{2(n+1) + \frac{p^{n+1} -1}{p-1}}{4p^{n}}$ & $\frac{p^{n} + 2(n+1)}{4p^n}$ & $\frac{p^{n} + 2(n+1)}{2(n+1) + \frac{p^{n+1} -1}{p-1}}$ & $\frac{2n+3}{2(n+1) + \frac{p^{n+1} -1}{p-1}}$ \\
    $C_{p^n} \rtimes Q_{8}$ & $\frac{3 \left[(n+1) +  \frac{p^{n+1} -1}{p-1} \right]}{8p^{n}}$ & $\frac{2p^{n} + 3(n+1)}{8p^n}$ & $\frac{2p^{n} + 3(n+1)}{3 \left[(n+1) +  \frac{p^{n+1} -1}{p-1} \right]}$ & $\frac{3n+6}{3 \left[(n+1) +  \frac{p^{n+1} -1}{p-1} \right]}$ \\
    \hline
  \end{tabular}
\end{table}
\end{ex}

For the groups of Example \ref{Ex 5.4} we have when $n \longrightarrow \infty$

\begin{table}[h!]
\centering
  \begin{tabular}{lccccc}
    \hline
    $G$ & $\beta(G)_{\infty}$ & $\alpha(G)_{\infty}$ & $cdeg(G)_{\infty}$ & $ndeg(G)_{\infty}$ \\
    \hline
    $C_{p^{n}} \rtimes C_{4}$ & $\frac{1}{4}$ & $\frac{1}{4}$ & $1$ & $0$ \\
    $C_{p^{n}} \rtimes Q_{8}$ & $\frac{3}{8}$ & $\frac{1}{4}$ & $\frac{2}{3}$ & $0$ \\
    \hline
  \end{tabular}%
\label{tab:ca}
\end{table}  

\subsection{Density Results}

\quad \, In \cite{Marius} the question of the density of values of $\mathrm{cdeg}(G)$ on $[0,1]$ was raised. This question was fully resolved by Lazorec \cite{lazorec2024elementordersextraspecialgroups}; see also \cite{SHARMA_2025} for a related approach based on direct products. The next result offers an alternative route: we identify a new class of groups (e.g., extensions of the type $C_{p^2}\rtimes C_p$) whose products again provide a dense set of degrees of cyclicity, thus constituting an independent proof that complements the existing literature.

\begin{lem}
   Let $G = C_{p^2} \rtimes C_{p}$, then
\begin{equation*}
    cdeg(G) = \frac{p+1}{p+2}.
\end{equation*}    
\end{lem}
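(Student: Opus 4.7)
My plan is to realize $G$ as the non-abelian group of order $p^{3}$ and exponent $p^{2}$, which requires $p$ to be odd (for $p=2$ the group $C_{4}\rtimes C_{2}$ equals $D_{8}$ and has $cdeg(D_{8})=7/10\ne 3/4$, so the statement tacitly assumes $p$ odd). Fixing the standard presentation
\[
G=\langle a,b\mid a^{p^{2}}=b^{p}=1,\; bab^{-1}=a^{1+p}\rangle,
\]
I would first record the structural features $Z(G)=\langle a^{p}\rangle\cong C_{p}$, $G/Z(G)\cong C_{p}\times C_{p}$, and $\Phi(G)=Z(G)$.

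The key computational step is the identity $(a^{i}b^{j})^{p}=a^{ip}$ for every $i,j$. This follows by expanding the product and using $b^{j}ab^{-j}=a^{(1+p)^{j}}$ together with the congruence $(1+p)^{kj}\equiv 1+kjp\pmod{p^{2}}$; the geometric sum then contributes $p+jp\binom{p}{2}$, and $\binom{p}{2}p\equiv 0\pmod{p^{2}}$ precisely because $p$ is odd. Consequently $\Omega_{1}(G)=\langle a^{p},b\rangle\cong C_{p}\times C_{p}$, so $G$ has exactly $p^{2}-1$ elements of order $p$ and $p^{3}-p^{2}$ elements of order $p^{2}$. Dividing by $p-1$ and by $\phi(p^{2})=p(p-1)$ respectively yields
\[
c(G)=1+(p+1)+p=2p+2.
\]

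It remains to count all subgroups. Those of orders $1$, $p$, and $p^{3}$ contribute $1$, $p+1$, and $1$ (subgroups of order $p$ coincide with the cyclic ones). Every subgroup of order $p^{2}$ is maximal, hence contains $\Phi(G)=Z(G)$, and such subgroups are in bijection with the $p+1$ subgroups of order $p$ in $G/Z(G)\cong C_{p}\times C_{p}$. Therefore $|L(G)|=2p+4$ and
\[
cdeg(G)=\frac{c(G)}{|L(G)|}=\frac{2p+2}{2p+4}=\frac{p+1}{p+2}.
\]
The only non-routine step is the $p$-th power identity above: once $\Omega_{1}(G)$ is pinned down the rest is a mechanical application of the Frattini quotient, and that modular calculation is also where the parity hypothesis on $p$ is essential.
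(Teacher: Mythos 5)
Your proof is correct, but it takes a genuinely different route from the paper. The paper's proof is essentially two citations: it takes $|L(C_{p^2}\rtimes C_p)| = 2p+4$ from Lemma 3.2.1 of the Tarnauceanu--T\'oth paper on cyclicity degrees and $c(C_{p^2}\rtimes C_p) = 2p+2$ from Theorem 2 of Garonzi--Lima, then divides. You instead derive both counts from scratch: you identify $G$ with the modular group of order $p^3$ and exponent $p^2$, prove the power identity $(a^i b^j)^p = a^{ip}$ (valid exactly because $\binom{p}{2}p \equiv 0 \pmod{p^2}$ for odd $p$), deduce $\Omega_1(G) \cong C_p\times C_p$ to count cyclic subgroups, and use $\Phi(G) = Z(G)$ to count the maximal subgroups via $G/\Phi(G) \cong C_p\times C_p$. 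What your approach buys is self-containedness and, importantly, an explicit identification of where the hypothesis $p$ odd enters: the lemma as stated omits this hypothesis (it appears only in the theorem that follows and in the appendix table), and your observation that $C_4\rtimes C_2 \cong D_8$ has $\mathrm{cdeg} = 7/10 \neq 3/4$ confirms the formula genuinely fails at $p=2$. The paper's citation-based proof is shorter but hides this point inside the quoted results. All of your intermediate claims check out: $c(G) = 1 + (p+1) + p = 2p+2$ and $|L(G)| = 1 + (p+1) + (p+1) + 1 = 2p+4$.
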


\begin{proof}
Based on Lemma 3.2.1 in \cite{Marius} we have that $|L(C_{p^2} \rtimes C_{p})| = 2p+4$. Using Theorem 2 of \cite{Garonzi_2018} we have that $c(C_{p^2} \rtimes C_{p}) = 2p+2$. Thus, we have

$$
cdeg(C_{p^2} \rtimes C_{p}) = \frac{2p+2}{2p+4} = \frac{2(p+1)}{2(p+2)} = \frac{p+1}{p+2}.
$$

\end{proof}

\begin{theorem}
Let $C_{p^2} \rtimes C_{p}$, where $p \geq 3$ with p prime. Then the set $ cdeg(\bigotimes_{i \in I}C_{p^2_{i}} \rtimes C_{p_{i}}),|I| < \infty, \, \text{and} \, \, p_{i} \, \text{is the ith prime number}, \forall i \in I \}$ is dense in $(0, 1]$.
\end{theorem}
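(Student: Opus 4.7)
The plan is to reduce the theorem, via multiplicativity of $cdeg$ on coprime direct factors, to a density question for subset sums of a divergent series of vanishing positive terms. The groups $G_i = C_{p_i^2}\rtimes C_{p_i}$ have orders $p_i^3$, pairwise coprime for distinct primes $p_i$. In a direct product of groups of pairwise coprime orders every subgroup splits as a product of subgroups of the factors (and is cyclic if and only if both projections are cyclic), so $\alpha$ and $\beta$ are both multiplicative on such products, and hence so is $cdeg = \alpha/\beta$. Combined with the preceding lemma, this yields
$$cdeg\Bigl(\prod_{i\in I} C_{p_i^2}\rtimes C_{p_i}\Bigr)=\prod_{i\in I}\frac{p_i+1}{p_i+2}.$$

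Taking $-\log$, density of this family in $(0,1]$ is equivalent to density of the set of finite subset sums $\bigl\{\sum_{i\in I} a_{p_i}:|I|<\infty\bigr\}$ in $[0,\infty)$, where $a_p := \log\frac{p+2}{p+1}$ for primes $p\geq 3$. These weights are strictly positive, and from $a_p = \log\!\bigl(1+\tfrac{1}{p+1}\bigr)\sim 1/p$ one reads off both $a_p\to 0$ and, via the classical divergence of $\sum_{p\text{ prime}} 1/p$, the relation $\sum_{p\geq 3} a_p = \infty$.

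The density claim then follows by the standard truncation argument: given $T>0$ and $\varepsilon>0$, I would choose $N$ with $a_{p_i}<\varepsilon$ for every $i\geq N$; since the tail $\sum_{i\geq N} a_{p_i}$ still diverges, there is a smallest $M\geq N$ with $\sum_{i=N}^{M} a_{p_i}>T$, and then $\sum_{i=N}^{M-1} a_{p_i}$ lies in $(T-\varepsilon,\,T]$. The boundary value $T=0$ is realised by the empty product, which gives $1\in(0,1]$. Exponentiating recovers density of the original set of cyclicity degrees in the full interval $(0,1]$.

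The only non-elementary ingredient is the divergence of $\sum 1/p$ over primes; everything else is routine. The point I would spell out most carefully — more a bookkeeping subtlety than a genuine obstacle — is the multiplicativity of $cdeg$ across coprime direct factors. It is used silently throughout the reduction but should be justified explicitly, since it is exactly what converts the problem, via the lemma $cdeg(C_{p^2}\rtimes C_p)=\frac{p+1}{p+2}$, into the classical subset-sum density question solved above.
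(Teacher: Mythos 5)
Your proposal follows essentially the same route as the paper: reduce via multiplicativity of $\mathrm{cdeg}$ on coprime direct factors to the value $\prod_{i\in I}\frac{p_i+1}{p_i+2}$, pass to $-\log$, and establish density of finite subset sums of $a_{p}=\log\frac{p+2}{p+1}$ in $[0,\infty)$. There are two differences worth noting, both in your favour. First, the paper justifies divergence of $\sum_n \ln\frac{p_n+2}{p_n+1}$ by asserting that ``as $p_n$ increases the ratio also increases,'' which is false ($\frac{p_n+2}{p_n+1}=1+\frac{1}{p_n+1}$ decreases to $1$) and does not by itself give divergence; your argument supplies the correct reason, namely $a_p\sim 1/p$ together with the divergence of $\sum_p 1/p$ over primes, which is the one genuinely non-elementary ingredient and the step the paper leaves unproved. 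Second, where the paper cites Lazorec's Lemma 4.1 for the subset-sum density statement, you prove it directly by the standard truncation argument (choose $N$ with all tail terms below $\varepsilon$, then take the last partial tail sum not exceeding $T$), making the proof self-contained. Both versions rely on the preceding lemma $\mathrm{cdeg}(C_{p^2}\rtimes C_p)=\frac{p+1}{p+2}$ and on multiplicativity of $\mathrm{cdeg}$ across factors of coprime order, which you rightly flag as the point to spell out explicitly.
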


\begin{proof}
According to \cite{Lazorec}'s Lemma 4.1 it is sufficient to take $(x_n)_{n \geq 1}$ to be a sequence of positive real numbers such that $\lim_{n \longrightarrow \infty} x_n = 0$ and $\sum_{n=1}^{\infty} x_{n}$ is divergent to show that the sequence is dense in $[0,\infty)$. For this case let us take $x_{n} = -\ln(cdeg(C_{p^2_{n}} \rtimes C_{p_{n}}))$. Clearly $x_{n}>0$ and $\lim_{n \longrightarrow \infty} x_n = 0$. Now it remains to show that the series $\sum_{n=1}^{\infty} x_{n} = -\sum_{n=1}^{\infty}\ln(cdeg(C_{p^2_{n}} \rtimes C_{p_{n}}))$ is divergent. Indeed:

\begin{align*}
 \sum_{n=1}^{\infty} x_{n} = -\sum_{n=1}^{\infty}\ln(cdeg(C_{p^2_{n}} \rtimes C_{p_{n}}))  &=  -\sum_{n=1}^{\infty}\ln\left(\frac{p_{n}+1}{p_{n}+2}\right)\\ 
 &= \sum_{n=1}^{\infty}\ln\left(\frac{p_{n}+2}{p_{n}+1}\right)\\
 &= \ln\left( \prod_{i=1}^{\infty} \frac{p_{n}+2}{p_{n}+1} \right).
\end{align*}

As $p_{n}$ increases the ratio also increases. Since the logarithm is a monotone and increasing function, it follows that the expression tends to infinity slowly, and therefore diverges. Therefore, according to Lemma 4.1 of \cite{Lazorec}, it follows that

\[
\overline{
\left\{
-\ln \prod_{i \in I} cdeg(C_{p^2_{n}} \rtimes C_{p_{n}}) \;\middle|\; I \subset \mathbb{N} \setminus \{0\},\, |I| < \infty \text{ and } p_i \text{ is the } i\text{th prime number},\, \forall i \in I
\right\}
} = [0, \infty).
\]

Since the exponential function is continuous and the above relation expresses an equality between the closures of two sets of $\mathbb{R}$, we obtain
\[
\overline{
\left\{
\prod_{i \in I} cdeg(C_{p^2_{n}} \rtimes C_{p_{n}}) \;\middle|\; I \subset \mathbb{N} \setminus \{0\},\, |I| < \infty \text{ and } p_i \text{ is the } i\text{th prime number},\, \forall i \in I
\right\}
} = (0, 1].
\]

The conclusion follows by applying the multiplicativity of $cdeg$  for the left-hand side of the above equality. 

\end{proof}

Note that this class of groups is not the only one that has this property. The groups $C_{p^2} \times C_{p}$ also have this property. To observe it more clearly, it is necessary to observe the behavior of cdeg for $C_{p^2} \times C_{p}$, considering Corollary 3.1.2 in \cite{Marius} taking $\alpha_1 = 2$ and $\alpha_2 = 1$ the result follows.


\section{Conclusion}

\quad \, We exploit the identity of Theorem \ref{T1} which serves as a unifying lens to quickly organize and deduce asymptotic values and bounds of $\alpha$, $\beta$ and $\mathrm{cdeg}$ in classical families of finite groups. From it, we obtain explicit formulas and bound behaviors for $D_{2^n}$, $Q_{2^n}$ and $SD_{2^n}$, as well as for 2-power abelian classes and for Hamiltonian groups of type $Q_8\times C_2^n\times A$, highlighting the relation $\alpha(Q_8\times C_2^n\times A)=\tfrac{5}{8}\,\alpha(A)$. In the dicyclic case, we recover the subgroup count using $\lvert L(\mathrm{Dic}_n)\rvert=\sigma(n)+\tau(2n)$, which directly yields $\beta(\mathrm{Dic}_n)$ and allows us to illustrate the values with numerical examples.

Regarding the question of the density of values of $\mathrm{cdeg}(G)$ in $[0,1]$, proposed in \cite{Marius}, we note that it was completely resolved by Lazorec \cite{lazorec2024elementordersextraspecialgroups} (see also \cite{SHARMA_2025}). In parallel, we show that certain elementary semidirects (e.g., groups of the type $C_{p^2}\rtimes C_p$) generate, via direct products, a family of explicit rational values whose multiplication again produces a dense set; this alternative approach is conceptually simple and complements the existing overview. To facilitate reference, we have compiled a summary table of the main expressions, limits, and internal references, which has been moved to the appendix (see Table~\ref{tab:summary}).

In summary, this work consolidates and streamlines calculations of $\alpha$, $\beta$, and $\mathrm{cdeg}$ in several families, offers a unified view through the identity $\mathrm{cdeg}=\alpha/\beta$, and presents an alternative construction for the density of $\mathrm{cdeg}$, contributing to a clearer picture of its distribution in finite groups.



\newpage



\newpage
\section{Appendix}

\newsavebox{\tempbox}
\setbox\tempbox=\vbox{\section{Appendix}}  
\rotatebox{90}{%
  \begin{minipage}[c][\textwidth][c]{\dimexpr \textheight - \ht\tempbox - \dp\tempbox - \baselineskip - \parskip}
    \centering
    \captionof{table}{Summary of properties for various group families (Translated and reoriented)}
    \label{tab:summary}
    \resizebox{1\textwidth}{!}{
      \small
      \begin{tabular}{@{}lcccccccccc@{}}
        \toprule
        \textbf{Property} & \textbf{$D_{2^n}$} & \textbf{$Q_{2^n}$} & \textbf{$SD_{2^n}$} & \textbf{$C_{2^{n+1}}\times C_2$} & \textbf{$C_{2^{n}p}\times C_2$} & \textbf{$Q_8\times C_2^{n}\times A$} & \textbf{$\mathrm{Dic}_n$} & \textbf{$C_{p^n}\rtimes C_4$} & \textbf{$C_{p^n}\rtimes Q_8$} & \textbf{$C_{p^2}\rtimes C_p$} \\
        & (dihedral) & (gen. quat.) & (semidihedral) & & ($p\ge 3$ prime) & (hamiltonian) & (dicyclic) & ($p$ prime) & ($p$ prime) & ($p\ge 3$ prime) \\
        \midrule
        \textbf{Parameter} & $n\ge 2$ & $n\ge 3$ & $n\ge 4$ & $n\ge 1$ & $n\ge 1$ & \begin{tabular}{@{}c@{}}$n\ge 0$, \\ $A$ odd abelian\end{tabular} & $|G|=4n$ & $n\ge 1$ & $n\ge 1$ & $p\ge 3$ \\
        \textbf{$|G|$} & $2^{n}$ & $2^{n}$ & $2^{n}$ & $2^{n+2}$ & $2^{n+1}p$ & $2^{n+3}|A|$ & $4n$ & $4p^{n}$ & $8p^{n}$ & $p^{3}$ \\
        \textbf{$|L(G)|$} & $2^{n}+n-1$ & $2^{n-1}+n-1$ & $3\cdot 2^{n-2}+n-1$ & $3n+5$ & $6n+4$ & $b_{n,2}\cdot |L(A)|$ & $\tau(2n)+\sigma(n)$ & $2(n+1)+\frac{p^{n+1}-1}{p-1}$ & $3\left[(n+1)+\frac{p^{n+1}-1}{p-1}\right]$ & $2p+4$ \\
        \textbf{$c(G)$} & $2^{n-1}+n$ & $2^{n-2}+n$ & $3\cdot 2^{n-3}+n$ & $2(n+2)$ & $4(n+1)$ & $5\cdot 2^{n}\, c(A)$ & $-$ & $\frac{p^{n}+2(n+1)}{2}$ & $\frac{2p^{n}+3(n+1)}{2}$ & $2p+2$ \\
        \textbf{$\alpha(G)$} & $\dfrac{2^{n-1}+n}{2^{n}}$ & $\dfrac{2^{n-2}+n}{2^{n}}$ & $\dfrac{3\cdot 2^{n-3}+n}{2^{n}}$ & $\dfrac{n+2}{2^{n+1}}$ & $\dfrac{2(n+1)}{2^{n}p}$ & $\dfrac{5}{8}\,\alpha(A)$ & $-$ & $\dfrac{p^{n}+2(n+1)}{4p^{n}}$ & $\dfrac{2p^{n}+3(n+1)}{8p^{n}}$ & $\dfrac{2p+2}{p^{3}}$ \\
        \textbf{$\beta(G)$} & $\dfrac{2^{n}+n-1}{2^{n}}$ & $\dfrac{2^{n-1}+n-1}{2^{n}}$ & $\dfrac{3\cdot 2^{n-2}+n-1}{2^{n}}$ & $\dfrac{3n+5}{2^{n+2}}$ & $\dfrac{3n+2}{2^{n}p}$ & $\beta(A)\cdot \dfrac{b_{n,2}}{2^{n+3}}$ & $\dfrac{\tau(2n)+\sigma(n)}{4n}$ & $\dfrac{2(n+1)+\frac{p^{n+1}-1}{p-1}}{4p^{n}}$ & $\dfrac{3\left[(n+1)+\frac{p^{n+1}-1}{p-1}\right]}{8p^{n}}$ & $\dfrac{2p+4}{p^{3}}$ \\
        \textbf{$\mathrm{cdeg}(G)$} & $\dfrac{2^{n-1}+n}{2^{n}+n-1}$ & $\dfrac{2^{n-2}+n}{2^{n-1}+n-1}$ & $\dfrac{3\cdot 2^{n-3}+n}{3\cdot 2^{n-2}+n-1}$ & $\dfrac{2(n+2)}{3n+5}$ & $\dfrac{2(n+1)}{3n+2}$ & $\dfrac{5\cdot 2^{n}}{b_{n,2}}\ \mathrm{cdeg}(A)$ & $-$ & $\dfrac{p^{n}+2(n+1)}{2\left[(n+1)+\frac{p^{n+1}-1}{p-1}\right]}$ & $\dfrac{2p^{n}+3(n+1)}{3\left[(n+1)+\frac{p^{n+1}-1}{p-1}\right]}$ & $\dfrac{p+1}{p+2}$ \\
        \textbf{Limit} & \begin{tabular}{@{}c@{}}$\alpha\to \tfrac12$\\$\beta\to 1$\\$\mathrm{cdeg}\to \tfrac12$\end{tabular} & \begin{tabular}{@{}c@{}}$\alpha\to \tfrac14$\\$\beta\to \tfrac12$\\$\mathrm{cdeg}\to \tfrac12$\end{tabular} & \begin{tabular}{@{}c@{}}$\alpha\to \tfrac38$\\$\beta\to \tfrac34$\\$\mathrm{cdeg}\to \tfrac12$\end{tabular} & \begin{tabular}{@{}c@{}}$\alpha\to 0$\\$\beta\to 0$\\$\mathrm{cdeg}\to \tfrac23$\end{tabular} & \begin{tabular}{@{}c@{}}$\alpha\to 0$\\$\beta\to 0$\\$\mathrm{cdeg}\to \tfrac23$\end{tabular} & \begin{tabular}{@{}c@{}}See Cor. 4.9 \\ e.g. $A=C_p^m$: \\ $\alpha\to \tfrac{5}{8(p-1)}$\end{tabular} & $-$ & \begin{tabular}{@{}c@{}}$\alpha\to \tfrac14$\\$\beta\to \tfrac{p}{4(p-1)}$\\$\mathrm{cdeg}\to \tfrac{p-1}{p}$\end{tabular} & \begin{tabular}{@{}c@{}}$\alpha\to \tfrac14$\\$\beta\to \tfrac{3p}{8(p-1)}$\\$\mathrm{cdeg}\to \tfrac{2(p-1)}{3p}$\end{tabular} & \begin{tabular}{@{}c@{}}$p\to\infty:$ \\ $\alpha,\beta\to 0$\\$\mathrm{cdeg}\to 1$\end{tabular} \\
        \textbf{Source} & \begin{tabular}{@{}c@{}}Prop. 3.2, 3.4;\\Cor. 3.3, 3.5\end{tabular} & \begin{tabular}{@{}c@{}}Prop. 3.2, 3.4;\\Cor. 3.3, 3.5\end{tabular} & \begin{tabular}{@{}c@{}}Prop. 3.2, 3.4;\\Cor. 3.3, 3.5\end{tabular} & \begin{tabular}{@{}c@{}}Prop. 4.1, 4.5;\\Cor. 4.6\end{tabular} & \begin{tabular}{@{}c@{}}Prop. 4.3, 4.5;\\Cor. 4.4, 4.6\end{tabular} & Cor. 4.9 & Thm. 5.1 & \begin{tabular}{@{}c@{}}Ex. 5.4 \\ (depends \\ on Prop. 5.2)\end{tabular} & \begin{tabular}{@{}c@{}}Ex. 5.4 \\ (depends \\ on Prop. 5.2)\end{tabular} & \begin{tabular}{@{}c@{}}Lemma 5.5 \\ (counts used \\ in the proof)\end{tabular}\\
        \bottomrule
      \end{tabular}
    }
  \end{minipage}}

\end{document}